\documentclass[11pt]{article}
\usepackage[a4paper,margin=0.88in]{geometry}
\usepackage{amsmath,amssymb,amsthm,mathtools}

\usepackage{enumitem}
\usepackage{microtype}
\usepackage[hidelinks]{hyperref}
\theoremstyle{plain}
\newtheorem{theorem}{Theorem}[section]
\newtheorem{lemma}[theorem]{Lemma}

\newtheorem{proposition}[theorem]{Proposition}

\theoremstyle{definition}
\newtheorem{definition}[theorem]{Definition}
\theoremstyle{remark}

\theoremstyle{plain}
\newtheorem{question}[theorem]{Question}
\theoremstyle{plain}

\usepackage{xcolor}
\usepackage[nameinlink,capitalise]{cleveref}
\definecolor{darkblue}{RGB}{0, 0, 100}
\definecolor{darkorange}{RGB}{200, 100, 0}
\definecolor{repairorange}{RGB}{220, 110, 0}

\newcommand{\Cvec}{\vec C}

\newcommand{\eps}{\varepsilon}
\newcommand{\ind}{\alpha}
\newcommand{\semid}{\delta^0}

\newcommand{\N}{\mathbb N}

\newcommand{\cP}{\mathcal P}
\newcommand{\bfe}{\mathbf e}
\newcommand{\bfi}{\mathbf i}
\newcommand{\calI}{\mathcal I}

\title{Ramsey--Tur\'an Factors of Arbitrarily Oriented Cycles\\ in Oriented Graphs}
\author{ Jia Zhou$^1$,  Yunshu Gao$^1$\thanks{Corresponding author: gysh2004@gmail.com}\\{\small
$^1$ School of Mathematics and Statistics, Ningxia University, Yinchuan 750021, China }}

\begin{document}
\maketitle

\begin{abstract}
Let $\Cvec$ be any orientation of the cycle $C_\ell$ which is not
directed.  We prove that, for every {integer $\ell\ge3$ and every $\mu>0$}, there is a real $\gamma$ such that every sufficiently
large oriented graph $D$ with $\ell\mid |D|$, $\semid(D)\geq(1/4+\mu)|D|$ and $\ind(D)\leq \gamma |D|$
has a $\Cvec$-factor. {The constant $1/4$ is asymptotically tight.} This proof establishes Ramsey–Turán‑type lattice‑absorption lemmas and an almost‑covering theorem via the oriented‑tree embedding lemma under chromatic‑number constraints.
\end{abstract}

{\noindent\small{\bf Keywords: } digraphs; cycle factor; Ramsey--Tur\'an; independence number; absorption method}
\vspace{1ex}

{\noindent\small{\bf AMS subject classifications.} 05C20, 05C35, 05C70}

\section{Introduction}

Factor existence constitutes a fundamental theme in extremal graph theory, bridging degree thresholds, divisibility obstructions, and graph tiling structures. Let \(H\) be a graph (resp., digraph) and let \(G\) be a  graph (resp., digraph). An \textbf{\(H\)-factor} is a collection of
vertex-disjoint copies of \(H\),
which covers all the vertices of \(G\). Hell and Kirkpatrick~\cite{HellKirkpatrick} proved that the decision problem for the existence of an $H$-factor is NP-complete whenever $H$ contains a connected component of order at least three. This result implies that a complete structural characterization of $H$-factorable graphs is generally unattainable, which motivates the extensive study of sufficient degree conditions for $H$-factors. Dirac's theorem and
{the} Hajnal-Szemer\'edi theorem are the classical results in this direction. Specifically, Dirac~\cite{Dirac} proved that every
\(n\)-vertex graph \(G\) with
$\delta(G)\geq \frac{n}{2}$
contains a \textbf{Hamilton cycle}, which is a cycle containing every vertex of $G$. The Hajnal--Szemer\'edi theorem~\cite{HajnalSzemeredi} states that, whenever \(r\mid n\),
every \(n\)-vertex graph \(G\) satisfying $\delta(G)\geq \left(1-\frac1r\right)n$ contains a \(K_r\)-factor. The case \(r=3\) was proved earlier by Corr\'adi and Hajnal~\cite{CorradiHajnal}.

Ramsey--Tur\'an theory provides a natural framework for problems concerning $H$-factors. A set $U\subseteq V(G)$ is \textbf{independent} if $G[U]$ contains no {edges}, and the independence number $\alpha(G)$ denotes the maximum cardinality of such a set. Ramsey--Tur\'an theory was initiated by Erd\H{o}s and S\'os~\cite{ErdosSos}. They established the asymptotic Ramsey--Tur\'an density for odd cliques and posed the corresponding problem for even cliques under the constraint that $\alpha(G)$ is bounded. This problem was settled later by Erd\H{o}s, Hajnal, S\'os and Szemer\'edi. This
type of problem is referred to as a Ramsey-Turán problem, and has been studied extensively, see \cite{CHY2025,ErdosHajnalSosSzemeredi,HMWY2024,HNY2026,SimonovitsSos}. In 2016, Balogh, Molla, and Sharifzadeh~\cite{BaloghMollaSharifzadeh} posed the following Ramsey--Tur\'an type variant of the Hajnal--Szemer\'edi theorem, and resolved the case $r = 3$ for sufficiently large graphs.
\begin{question}\label{q:rt-kr-tiling}~\cite{BaloghMollaSharifzadeh}
Let $G$ be an $n$-vertex graph with {$\alpha(G) = o(n)$}. What is the minimum degree condition on $G$ that guarantees a $K_r$-factor in $G$ for $r \ge 3$?
\end{question}

In 2021, Knierim and Su~\cite{KnierimSu} resolved Question \ref{q:rt-kr-tiling} for all $r\geq 4$, and Chen, Han, Wang, and Yang~\cite{CHWY2024} further generalized it to the theory of general $H$-factors.
 
These results have also motivated
an extensive study of $H$-factors in digraphs,  see for instance~\cite{BaloghLoMolla,ChenLuWangZhang2026,CKM2014, DeBiasioLoMollaTreglown,KeevashSudakov, Treglown,Wang2000, Yuster2003}. Exploring degree conditions that guarantee cycle factors with arbitrary orientations is a natural generalization of classical cycle‑factor problems. Czygrinow, Kierstead and Molla \cite[see Corollary 1.5.7]{molla} proved that sufficiently large digraphs with minimum total degree is at least $(3n-3)/2$ can be partitioned into $k$-vertex tiles covering all orientations of $k$-cycles. For odd oriented cycles, Molla and Treglown \cite{MollaTreglown2026} established the asymptotically tight semidegree threshold for $\vec{C}$-factors, where $\vec{C}$ is an orientation of a cycle. Here, the \textbf{minimum semidegree} $\delta^0(D)$ of a digraph $D$ is the smallest value obtained by taking, for each vertex, the smaller of its out-degree and in-degree.  
\begin{theorem}\label{thm:mollatreglown}\cite{MollaTreglown2026}
Given $k\in\mathbb{N}$ and $\eta>0$, there exists $n_0 = n_0(k,\eta)\in\mathbb{N}$ such that the following holds for all $n\ge n_0$ and $2k+1\mid n$. Let $\Cvec$ be an orientation of a cycle on $2k+1$ vertices. If $D$ is an $n$-vertex digraph with
\[
\delta^0(D)\ge \frac{(k+1)n}{2k+1}+\eta n,
\]
then $D$ contains a $\Cvec$-factor.
\end{theorem}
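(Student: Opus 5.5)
The natural route to \cref{thm:mollatreglown} is the absorption method, organised in three stages: an absorbing set, an almost-perfect tiling, and a final merge. Write $\ell=2k+1$ and fix constants $1/n_0\ll \tau \ll \beta\ll \eta$.

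\textbf{Absorbing stage.} The goal is a set $A\subseteq V(D)$ with $|A|\le \beta n$ such that for every $W\subseteq V(D)\setminus A$ with $|W|\le \tau n$ and $\ell\mid |W|$, the digraph $D[A\cup W]$ has a $\Cvec$-factor. The plan is the lattice-based absorbing framework (Lo--Markstr\"om / Han). Call two vertices $u,v$ \emph{$(\Cvec,\beta,i)$-reachable} if there are $\beta n^{i\ell-1}$ sets $S$ of size $i\ell-1$ such that both $D[S\cup\{u\}]$ and $D[S\cup\{v\}]$ have $\Cvec$-factors. I would prove three things in turn.
\begin{enumerate}
\item Every vertex reaches linearly many others. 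Since $\delta^0(D)>n/2$, each pair $u,v$ has $\Omega(n)$ common in- or out-neighbours, and since $\delta^0(D)\ge (k+1)n/(2k+1)+\eta n$, each vertex lies in $\Omega(n^{\ell-1})$ copies of $\Cvec$. A counting/greedy argument swapping $u$ for $v$ in a copy of $\Cvec$ then gives reachability.
\item $V(D)$ partitions into a bounded number of closed classes.
\item The lattice generated by the $\Cvec$-copies is full, i.e.\ the partition is trivial.
\end{enumerate}
For step 3, the key fact is that a non-directed orientation of an odd cycle (and indeed a directed one) can be embedded ``transversally'' in several ways. If there were two classes $V_1,V_2$, each of size at least roughly $n/(2k+1)$, the semidegree condition would force many edges inside and between the classes. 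One then exhibits copies of $\Cvec$ meeting $V_1$ in $j$ and $j+1$ vertices, which makes the class-vector $(1,-1)$ lie in the lattice, and merging the classes follows. Once the partition is trivial, the standard probabilistic selection of disjoint absorbers yields $A$.

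\textbf{Almost cover.} Apply the Diregularity Lemma to $D'=D-A$, which still has $\delta^0\ge((k+1)/(2k+1)+\eta/2)n$. The reduced oriented digraph $R$ inherits minimum semidegree at least $((k+1)/(2k+1)+\eta/4)|R|$. The aim is a perfect fractional tiling of $R$ by a bounded blow-up of $\Cvec$, for instance by a ``balanced'' digraph $T$ such as the blow-up of $\Cvec$ with equal part sizes, or $\vec{C}_3$-type or transitive-triangle-type homomorphic images into which $\Cvec$ maps with balanced preimages. For this I would use a Hajnal--Szemer\'edi type argument on $R$: the threshold $(k+1)/(2k+1)$ is exactly what makes a homomorphic image of $\Cvec$ onto a short cycle in $R$ available in a fractional tiling. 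The Blow-up Lemma then converts each tile of $R$ into vertex-disjoint copies of $\Cvec$ covering all but $\tau n$ vertices of $D'$.

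\textbf{Merging and the main obstacle.} Let $W$ be the leftover set. Since $\ell\mid n$ and each copy of $\Cvec$ uses $\ell$ vertices, we have $\ell\mid |W|$, and $A$ absorbs $W$, completing the factor. The main obstacle is the almost-cover. The tight constant $(k+1)/(2k+1)$ is governed by space barriers, where an independent-type set of size about $kn/(2k+1)$ meets each copy of $\Cvec$ in at most $k$ vertices. So the fractional tiling must be chosen so that every tile uses independent-set-like parts at ratio at most $k/(2k+1)$. I would first try an LP-duality argument on $R$ (a fractional Hajnal--Szemer\'edi via Farkas), showing that any obstruction to a perfect fractional $\Cvec$-homomorphism tiling forces a set of size exceeding $(k+1)|R|/(2k+1)$ that is sparse, contradicting the semidegree condition.
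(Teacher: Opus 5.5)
The paper gives no proof of this statement. It is quoted from Molla and Treglown as background, so your outline has to stand on its own.

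\textbf{Absorbing stage.} This part is sound, and simpler than you make it. Since $(k+1)/(2k+1)>1/2$, any two vertices $u,v$ have at least $2\delta^0(D)-n\ge n/(2k+1)+2\eta n$ common out-neighbours, and as many common in-neighbours. Fix a vertex $c_0$ of $\Cvec$ and place its two cycle-neighbours in the appropriate common neighbourhoods of $u$ and $v$. Then join them by an oriented path with $2k-1$ arcs of the prescribed orientation. For $k\ge 2$, embed greedily and close through a common in-/out-neighbour. For $k=1$, an arc of the right direction between the two sets exists by counting. So every pair is $(\Cvec,\beta n,1)$-reachable and $V(D)$ is already closed; your steps 2 and 3 (closed partition, lattice merging) are unnecessary.

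\textbf{The almost cover is the genuine gap.} This step is the real content of the theorem, and you only assert it. For undirected $C_{2k+1}$, Komlós's theorem gives exactly $(k+1)/(2k+1)$, because $C_{2k+1}$ maps onto a triangle with preimage sizes $k,k,1$. In a digraph, orientations restrict which homomorphic images exist.
\begin{itemize}
\item Mapping that colouring onto a triangle $xyz$ sends the path $c_1\cdots c_{2k}$ alternately to $y$ and $z$. Unless this path is antidirected, the pair $yz$ must carry arcs in both directions. The double-arc graph of $R$ is only guaranteed minimum degree about $|R|/(2k+1)$.
\item Some orientations admit no homomorphism into any oriented triangle at all. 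Take the $5$-cycle with arcs $c_0c_1,c_1c_2,c_2c_3,c_4c_3,c_0c_4$. The directed path $c_0c_1c_2c_3$ rules out $TT_3$, and the net length $3-2=1\not\equiv 0 \pmod 3$ rules out $\vec C_3$.
\end{itemize}
So the ``$\vec C_3$-type or transitive-triangle-type'' images you propose do not exist for general $\Cvec$. Using the balanced blow-up of $\Cvec$ itself just moves the original tiling problem into $R$.

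\textbf{The LP-duality step.} Your closing idea restates what must be proved rather than arguing it. A Farkas certificate is a real weighting of $V(R)$, and you give no mechanism for turning it into a sparse set. Your target set is also the wrong one. The space barrier, and the set whose existence would contradict the semidegree bound, is a sparse set of size just above $k|R|/(2k+1)$, not $(k+1)|R|/(2k+1)$.

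What is missing is a Komlós-type theorem for digraphs. It would state that minimum semidegree $\bigl((k+1)/(2k+1)+\eta\bigr)|R|$ forces a perfect fractional tiling of $R$ by orientation-respecting homomorphic images of $\Cvec$, including images that use double arcs or longer cycles. Without that, the almost-cover step, and hence the proof, is incomplete.
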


An \textbf{oriented graph} is a digraph obtained by assigning a direction to each edge of a  graph. The thresholds in the above results can often be lowered when restricted to oriented graphs. In 2025, Lo proved the following.
\begin{theorem}\cite{lo}\label{lo}
For all $\varepsilon>0$, there exists $\ell_0=\ell_0(\varepsilon)$ such that for all $\ell\ge \ell_0$ and any oriented cycle $\Cvec $ on $\ell$ vertices, any oriented graph $D$ on $k\ell$ vertices with $\delta^0(D)\ge (3/8+\varepsilon)k\ell$ contains a $\Cvec $-factor. Moreover, the constant $3/8$ cannot be improved.
\end{theorem}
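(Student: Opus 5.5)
The plan is the absorbing method combined with the regularity method, reusing the machinery behind the $3/8$ Hamilton-cycle threshold for oriented graphs of Keevash, K\"uhn and Osthus. The threshold $3/8$ is what lets us build long cycles of prescribed orientation through the whole graph. We fix $\varepsilon$ and choose constants in the order
\[
1/n \ll 1/\ell_0,\ \beta \ll \varepsilon,
\]
then work with $D$ on $n=k\ell$ vertices. The argument has three steps: (i) an absorbing set $A$; (ii) an almost-cover of $D-A$ by disjoint copies of $\Cvec$; (iii) absorption of the $\le\beta n$ leftover vertices by $A$.

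For (i), the natural absorber is a short path-like gadget. For every vertex $v$ we want many small vertex sets $S$ such that both $D[S]$ and $D[S\cup\{v\}]$ have $\Cvec$-factors, or more robustly such that $v$ can be inserted into a copy of $\Cvec$ by rerouting along a short segment. Since $\ell$ is large, $\Cvec$ contains long stretches that can be re-embedded flexibly. It therefore suffices to show the following: for any two vertices $u,v$ there are $\Omega(n^{m})$ short oriented paths of any prescribed small orientation pattern from $u$ to $v$, and in particular paths of two lengths differing by one. This is where the semidegree condition $(3/8+\varepsilon)n$ enters: it gives that out-neighbourhoods of distinct vertices expand, so short connections of all orientations exist in abundance. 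A random selection of absorbers (the usual Chernoff argument) then gives a set $A$ with $|A|\le\beta n$ and $\ell\mid|A|$ that absorbs any set $L$ with $|L|\le\beta^2 n$ and $\ell\mid|L|$. Some care with divisibility is needed: the natural way is to absorb $\ell$-sets, one copy of $\Cvec$ at a time.

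For (ii), which I expect to be the main obstacle, we apply the diregularity lemma to $D-A$. The reduced oriented graph $R$ then inherits $\delta^0(R)\ge(3/8+\varepsilon/2)|R|$. The key claim is that $R$ has an almost-perfect fractional (or integer) tiling by small structures into whose blow-ups long cycles of \emph{every} orientation embed. Examples are regular pairs lying on a closed walk, or the Hamilton-cycle structure in $R$ itself (taking an orientation-independent route is hard). The difficulty is that an arbitrary orientation of $C_\ell$ may have many direction changes, so one cannot simply wind around a directed cycle. What I would try first is to use the Hamilton-cycle result for oriented graphs applied to $R$ to obtain a spanning closed walk. Then, inside each regular pair $(V_i,V_j)$, we absorb the non-directed pieces of $\Cvec$ by alternating back and forth, since $\Cvec$ restricted to one dense regular pair behaves like a bipartite graph and is easy to embed by the blow-up lemma. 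Consecutive pairs along the walk are used to fix parity and connectivity. This covers all but $\beta^2 n$ vertices with copies of $\Cvec$, with the leftover made divisible by $\ell$.

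Finally, (iii) absorbs the leftover into $A$. Tightness of $3/8$ follows from the Keevash--K\"uhn--Osthus extremal oriented graph: four almost-equal-weight parts with a blow-up of a specific tournament pattern. There one checks that some orientation of $C_\ell$, for instance the directed cycle when $\ell$ has the wrong residue, cannot tile because of a counting or parity obstruction across the parts.
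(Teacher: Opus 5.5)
The paper does not prove this statement; it quotes it from Lo~\cite{lo}. So your proposal has to stand on its own, and it has two genuine gaps.

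The first gap is the quantifiers. Here $\ell_0$ depends only on $\varepsilon$ and $k\ge1$ is arbitrary, so $\ell$ can be as large as $n=k\ell$. The case $k=1$ is exactly Kelly's theorem that every orientation of a Hamilton cycle appears at semidegree $(3/8+\varepsilon)n$. Your hierarchy $1/n\ll 1/\ell_0$ tacitly assumes $1/n\ll 1/\ell$, and every step treats $\Cvec$ as a bounded-size graph: bounded-size absorbers, linearly many of them chosen by Chernoff, and a leftover of at most $\beta^2 n$ vertices absorbed $\ell$ at a time. Every absorber of an $\ell$-set contains a copy of $\Cvec$, so there are at most $n/\ell$ disjoint ones. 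Hence for $\ell=\Omega(n)$ the scheme collapses (step (ii) would have to be a perfect tiling), and in the intermediate range nothing keeps your constants independent of $\ell$. Some regime split is unavoidable. For instance, when $\ell\ge C(\varepsilon)\log n$, a random partition of $V(D)$ into $k$ sets of size $\ell$ keeps the relative semidegree above $3/8+\varepsilon/2$ with positive probability, which reduces to the case $k=1$. The range $\ell_0\le\ell\le C(\varepsilon)\log n$ still needs an argument uniform in $\ell$. Separately, your first absorber formulation, with both $D[S]$ and $D[S\cup\{v\}]$ having $\Cvec$-factors, is impossible by divisibility.

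The second gap is step (ii): you do not supply it, and the mechanism you sketch fails. An arc $ij$ of the reduced oriented graph $R$ only guarantees a dense regular pair in the direction $V_i\to V_j$; in a blow-up of a regular tournament there is nothing else. So inside one pair only anti-directed stretches of $\Cvec$ can be embedded, and $\Cvec$ does not ``behave like a bipartite graph'' there. Any two consecutive arcs of $\Cvec$ pointing the same way must be mapped onto a directed $2$-path of $R$ through three distinct clusters. What you need is homomorphisms of $\Cvec$ into $R$ together with an almost perfect, cluster-balanced fractional tiling of $R$ by their images.

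This is a real constraint. If $\Cvec$ has $f$ forward and $b$ backward arcs, it maps homomorphically into a directed cycle of length $L$ if and only if $L$ divides $f-b$. So following a Hamilton cycle of $R$ with back-and-forth moves inside pairs already fails for the directed $\ell$-cycle unless $|R|$ divides $\ell$. You would need extra structure in $R$ adapted to every orientation pattern (presumably where $\ell\ge\ell_0$ is used), a balancing argument, and an embedding step with constants independent of $\ell$. None of this is in the proposal.

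Your tightness remark is fine in spirit, but no residue condition is involved. In the Keevash--K\"uhn--Osthus example, every path joining two vertices of one vertex class passes through another, strictly smaller class. Hence no family of disjoint directed cycles covers all vertices, which blocks directed $\Cvec$-factors for every $\ell$ and $k$.
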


An \textbf{anti-directed cycle} \(C_{2s}^{\mathrm{ad}}\) is an orientation of an even cycle in which the
directions of consecutive edges alternate, which is a  typical example of a non‑directed cycle orientation. Grant~\cite{Grant1980} initiated the study of anti-directed cycles in digraphs by considering degree conditions forcing an
anti-directed Hamilton cycle. DeBiasio and Molla~\cite{DeBiasioMolla}
subsequently determined the corresponding minimum semidegree threshold in
digraphs.

More recently, Chen~\cite{ChenAntiDirected} established a minimum {semidegree} condition guaranteeing the existence of a {$C_{2s}^{\mathrm{ad}}$-factor} in oriented graphs. 
For every digraph $D$, $\boldsymbol{\alpha(D)}$ denotes the maximum size of a set of vertices with no arcs between any two distinct vertices within the set.
 Recently, analogous
Ramsey--Tur\'an type questions have also been considered for $H$-factor in digraphs. Chen, Kou, Ma, and Zhang~\cite{ChenKouMaZhang2026} proved a  Ramsey--Tur\'an result: every sufficiently large oriented graph $D$ with $3\mid |D|$,   $\delta^0(D)\geq(\tfrac14+o(1))|D|$, and independence number $\alpha(D)=o(|D|)$ admits a $TT_3$-factor, and their bound is asymptotically tight. Additionally, Wang, Wang and Yan determined an  asymptotically tight minimum total degree for the existence of $TT_3$-factors in digraphs~\cite{WWY2026}, and put the following question.
\begin{question}\label{question:Wang}\cite{WWY2026}
Let $D$ be an $n$-vertex digraph with $\alpha(D)=o(n)$ and let $H$ be a fixed digraph such that $|H|\mid n$. What is the minimum {semidegree} $\delta^0(D)$ condition on $D$ that guarantees a $H$-factor in $D$?
\end{question}
 An orientation of a cycle is \textbf{directed} whenever all its arcs follow the same cyclic orientation. A directed cycle contains  a source nor a sink, while any other orientation of a cycle admits at least one source and one sink. Removing either a source or a sink from such an oriented cycle yields an oriented path. This provides the primary motivation for our study of non-directed cycle orientations.  Motivated by these results, we prove the 
following results.

\begin{theorem}\label{thm:main}
Let $\Cvec$ be an orientation of $C_\ell$, $\ell\geq3$, which is not a
directed cycle.  For every $\mu>0$ there exist $\gamma>0$ and
$n_0\in\N$ such that the following holds.  If $n\geq n_0$, $\ell\mid n$, and
$D$ is an oriented graph on $n$ vertices satisfying
\[
\semid(D)\geq\left(\frac14+\mu\right)n
  \qquad \text{and} \qquad \ind(D)\leq\gamma n,
\]
then $D$ has a $\Cvec$-factor.
\end{theorem}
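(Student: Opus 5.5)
We plan to use the absorption method, organized in three stages: an absorbing structure, an almost-cover, and a final combination. Throughout, fix constants $1/n_0\ll\gamma\ll\beta\ll\eta\ll\mu,1/\ell$.

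\textbf{Step 1: absorbing set.} We use lattice-based absorption in the style of Han's framework. First we show that every vertex $v$ lies in $\Omega(n^{\ell-1})$ copies of $\Cvec$. Since $\Cvec$ is not directed, it has a sink $s$. Removing $s$ leaves an oriented path $P$ on $\ell-1$ vertices whose two endpoints both point into $s$.

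\begin{itemize}
\item Given $v$, its out-neighbourhood $N^+(v)$ has size at least $(1/4+\mu)n$.
\item Inside any set $U$ of linear size, the condition $\ind(D)\le\gamma n$ forces many arcs. Applying Ramsey-type arguments repeatedly (every set of size $\gamma n$ contains an arc) gives many copies of any bounded oriented path or tree inside $U$.
\item We embed the path $P$ so that both endpoints lie in $N^-(s)$ for a suitable $s$, or we embed $v$ at a suitable position of $\Cvec$.
\end{itemize}

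Next we define $v\sim u$ when there are $\Omega(n^{\ell m-1})$ sets $S$ of size $\ell m-1$ such that both $D[S\cup\{v\}]$ and $D[S\cup\{u\}]$ have $\Cvec$-factors. We show that every $v$ is reachable from at least $(1/4+\mu/2)n$ vertices. The argument uses the smallness of $\ind$: in $N^+(v)\cap N^+(u)$, or in $N^-(v)\cap N^-(u)$, we can find a common ``socket'' path $P$ whose two ends lie in the relevant neighbourhoods. The semidegree bound $1/4$ guarantees that one of the in/out intersections of $u$ and $v$ is linear for many $u$: the four sets $N^\pm(u)$ and $N^\pm(v)$ each have size above $n/4$, so some pair of them intersects in $\Omega(\mu n)$ vertices.

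This gives a partition of $V(D)$ into at most $4$ closed classes. We then show that the lattice generated by these classes is full. To merge classes, we find copies of $\Cvec$ that meet two classes in different proportions; these exist because arcs between classes force transferral. The standard lemma then yields an absorbing set $A$ with $|A|\le\beta n$ that absorbs any set $W$ with $|W|\le\beta^2 n$ and $\ell\mid|W|$.

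\textbf{Step 2: almost cover.} In $D-A$, which still has semidegree at least $(1/4+\mu/2)n$, we want a $\Cvec$-tiling covering all but $\beta^2 n$ vertices. We apply the regularity lemma for digraphs to obtain a reduced oriented graph $R$ with semidegree at least $(1/4+\mu/4)|R|$. Because $\ind(D)$ is small, each regular pair of clusters behaves like a dense structure, and each cluster itself contains many arcs. It therefore suffices to find a fractional tiling of $R$ by small oriented trees, or stars, whose clusters can host $\Cvec$ via an embedding lemma for trees of bounded chromatic number.

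Concretely, $\Cvec$ is $3$-colourable, and even $2$-colourable when $\ell$ is even. A copy of $\Cvec$ can be embedded into a single regular arc $X\to Y$ of $R$, using arcs inside $X$ and inside $Y$ supplied by small $\ind$. Hence we only need an almost-perfect matching in the underlying graph of $R$. That graph has minimum degree greater than $(1/2+\mu/2)|R|$, since the in- and out-neighbourhoods of a vertex in an oriented graph are disjoint. A near-perfect matching therefore exists. Inside each matched pair we then tile greedily, balancing the cluster sizes by choosing appropriate proportions of each copy's vertices in $X$ and in $Y$.

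\textbf{Step 3: finishing.} Let $W$ be the set of leftover vertices. Then $W\cup A$ has a $\Cvec$-factor by the absorbing property, which completes the $\Cvec$-factor of $D$.

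The main obstacle is the lattice step in Step 1: showing that reachability classes merge so that the lattice is full. The threshold $1/4$ is tight there, since extremal constructions come from near-bipartite splits. We would first try to show directly that any two classes are joined by $\Omega(n^2)$ arcs, using semidegree above $n/4$ with at most four classes. From such arcs we would build $\Cvec$-copies with shifted class-intersection vectors, exploiting the sink of $\Cvec$ to place a single vertex in another class.
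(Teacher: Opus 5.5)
Your overall architecture matches the paper's: reachability, then a closed partition, merging, and an absorbing set; then regularity, a near-perfect matching in the underlying graph of $R$, and greedy tiling of each matched pair by copies with index vectors $(1,\ell-1)$ and $(\ell-1,1)$. Steps 2--3 are essentially the paper's Lemmas~\ref{lem:two-cluster} and~\ref{lem:almost-cover}.

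\textbf{The gap: merging the closed classes.} This is the step you yourself flag, and you never prove it. The route you sketch also does not yield a transferral. A dense regular pair between classes $a$ and $b$ gives two robust vectors: put the source or sink on one side and embed $\Cvec-s$ or $\Cvec-t$ in its neighbourhood on the other. These vectors are $\mathbf e_a+(\ell-1)\mathbf e_b$ and $(\ell-1)\mathbf e_a+\mathbf e_b$. Their difference is $(\ell-2)(\mathbf e_b-\mathbf e_a)$, which is not a transferral once $\ell\ge4$. So ``any two classes are joined by $\Omega(n^2)$ arcs'' would not suffice, even if you could justify it (you do not).

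The missing idea is to refine the closed partition by the regularity partition and find a single cluster $X$ in some class $V_i$ whose neighbours in $UG(R)$ lie in two different classes $V_{j'}\neq V_{j''}$ (possibly $j'=i$). This is exactly where $\semid(D)\ge(1/4+\mu)n$ enters. Since $R$ is oriented, $\delta(UG(R))\ge 2\delta^0(R)>|R|/2$. Suppose every cluster had all its neighbours in one class. Then that class would contain more than half the clusters, so it would be the same class $Q$ for every cluster. A cluster $w\notin Q$ has all its (at least one) neighbours in $Q$. So some $x\in Q$ has the neighbour $w\notin Q$, a contradiction. This rules out the ``bipartite'' configuration in which every cluster sees only one other class.

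Given such an $X$, a typical vertex of each of the two neighbouring clusters has a linear in- or out-neighbourhood inside $X$. There, small $\ind$ and the chromatic tree-embedding lemma place $\Cvec-t$ or $\Cvec-s$. This gives robust vectors $(\ell-1)\mathbf e_i+\mathbf e_{j'}$ and $(\ell-1)\mathbf e_i+\mathbf e_{j''}$, whose difference is the transferral $\mathbf e_{j'}-\mathbf e_{j''}$. The transferral-merging lemma then lowers the number of classes by one, and you iterate.

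\textbf{A smaller error in Step 1.} Your pigeonhole over $N^\pm(u),N^\pm(v)$ does not produce connectors. We have $N^+(v)\cap N^-(v)=N^+(u)\cap N^-(u)=\emptyset$, so the forced large intersection may be $N^+(u)\cap N^-(v)$ or $N^-(u)\cap N^+(v)$. Neither is useful for swapping $u$ and $v$. Double count instead: $\sum_u|N^+(u)\cap N^+(v)|=\sum_{x\in N^+(v)}d^-(x)\ge\semid(D)^2$. Hence $\Omega(n)$ vertices $u$ share a linear out-neighbourhood with $v$, and a copy of $\Cvec-s$ embedded there while avoiding any small $W$ is a connector. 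This gives only linearly many reachable partners, not $(1/4+\mu/2)n$. Consequently you get a bounded number of classes rather than at most four, but that is all the closed-partition lemma needs.
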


Theorem~\ref{thm:main} contains the Ramsey--Tur\'an counterpart of Theorem~\ref{lo}, reducing the asymptotic semidegree threshold from $3n/8$ to $n/4$ under bounded independence number, excluding directed cycles. {For an odd cycle $\ell=2k+1$ with $k\ge2$, Theorem~\ref{thm:main} reduces the threshold $\frac{k+1}{2k+1}$ established in Theorem~\ref{thm:mollatreglown} to $1/4$ under the small independence assumption. For $\ell=3$, Theorem~\ref{thm:main} recovers the $TT_3$-factor result of Chen, Kou, Ma, and Zhang.} Proposition \ref{prop:lower} illustrates that  the bound of minimum semidegree in Theorem~\ref{thm:main} is asymptotically best possible. Thus Theorem~\ref{thm:main} {resolves the minimum semidegree version of} Question~\ref{question:Wang} {for all oriented graph $D$ and all oriented cycles $H=\Cvec$ that are not directed}.

 The proof of Theorem \ref{thm:main} utilizes the directed regularity lemma, reachability, closed partitions, robust index vectors, lattice merging, absorbing sets, and the almost covering lemma. Using the oriented tree embedding lemma under chromatic‑number constraints, we build the targeted robust index vectors. A careful construction yields elementary transferrals, and by leveraging the known lattice merging lemma we establish the Ramsey–Turán type lattice absorption lemmas and the almost covering theorem.

\paragraph{Organisation.} Section 2 introduces the notation used throughout the paper.   Section 3   presents the required diregularity and lattice‑based absorption tools.
We then establish the Ramsey--Tur\'an type absorbing lemma, and derive a Ramsey--Tur\'an type almost covering lemma (Lemma~\ref{lem:almost-cover}) to complete the proof. Section 4 provides the lower bound construction.

\section{Notation}

Notation not specified in this section is consistent with that in \cite{book}. Denote $ \boldsymbol{\mathbb N}$ for the set of positive integers. For integers $a\leq b$, let $ \boldsymbol{[a,b]}:=\{i\in\mathbb Z:a\leq i\leq b\}$, $ \boldsymbol{[b]}:=[1,b].$ If $h\in\mathbb N$, then $ \boldsymbol{h\mathbb N}$ is the set of positive
multiples of $h$. We use the notation $ \boldsymbol{0<a\ll b\ll c}$
to mean that $a$ is chosen sufficiently small in terms of $b$, and then
$b$ sufficiently small in terms of $c$.  For a set $W$ and
an integer $k\geq0$, $ \boldsymbol{\binom{W}{k}}$ denotes the family of all $k$-subsets
of $W$.    For a family $\mathcal{F}$ of sets, $ \boldsymbol{V(\mathcal{F})}$ denotes the union of the vertex sets of all members of $\mathcal{F}$.
  Given sets $A$ and $B$, the symbol $ \boldsymbol{A\mathbin{\dot\cup} B}$ stands for their disjoint union, i.e., $A\cap B=\emptyset$.

All digraphs are
finite and loopless, and no multiple arcs are allowed.      For a digraph $D$, we
write $V(D)$ and $A(D)$ for its vertex set and arc set, respectively, and
write $ \boldsymbol{|D|}:=|V(D)|$ for its \textbf{order}.   If
$U\subseteq V(D)$, then $ \boldsymbol{D[U]}$ denotes the subdigraph of $D$ induced by
$U$, and $ \boldsymbol{D-U}:=D[V(D)\setminus U]$. And if $F$ is a subdigraph of $D$, we
also write $ \boldsymbol{D-F}:=D-V(F)$.
 For $v\in V(D)$, let
\[
  \boldsymbol{N_D^+(v)}:=\{x\in V(D):vx\in A(D)\},
 \qquad
  \boldsymbol{N_D^-(v)}:=\{x\in V(D):xv\in A(D)\}
\]
be the \textbf{out-neighbourhood} and \textbf{in-neighbourhood} of $v$, respectively, and
let $  \boldsymbol{d_D^+(v)}:=|N_D^+(v)|$, $
  \boldsymbol{d_D^-(v)}:=|N_D^-(v)|.$
For $X\subseteq V(D)$, put
\[
  \boldsymbol{N_D^+(v,X)}:=N_D^+(v)\cap X,
 \qquad
 \boldsymbol{ N_D^-(v,X)}:=N_D^-(v)\cap X,
\]
and use $ \boldsymbol{d_D^+(v,X)}$ and $ \boldsymbol{d_D^-(v,X)}$ for their cardinalities.  The subscript is omitted when the digraph is clear.  The \textbf{minimum
out-degree}, \textbf{minimum in-degree}, \textbf{minimum semidegree} and \textbf{minimum total degree}
of $D$ are, respectively,
\[
  \boldsymbol{\delta^+(D)}:=\min_{v\in V(D)}d_D^+(v),
 \qquad
  \boldsymbol{\delta^-(D)}:=\min_{v\in V(D)}d_D^-(v),
\]
\[
  \boldsymbol{\delta^0(D)}:=\min\{\delta^+(D),\delta^-(D)\},
 \qquad
  \boldsymbol{\delta(D)}:=\min_{v\in V(D)}\bigl(d_D^+(v)+d_D^-(v)\bigr).
\]


 We write $ \boldsymbol{TT_r}$ for the \textbf{transitive tournament} on $r$ vertices; thus
$TT_r$ has an ordering $v_1,\ldots,v_r$ in which $v_iv_j$ is an arc for
every $i<j$.   


\section{Proof of Theorem \ref{thm:main}}

In this section, we first present several preliminary tools and lemmas required for the proof, before giving the complete proof.

\subsection{Diregularity}

To obtain desired Ramsey--Tur\'an absorbing lemma (Lemma \ref{cor:absorber}), we introduce the following powerful tool, that is, the Diregularity lemma, which is a version of the Regularity lemma for digraphs due to Alon and Shapira \cite{AlonShapira}. Its proof is quite similar to the undirected version. The \textbf{density} of a bipartite graph $G=(A,B)$ with vertex classes $A$ and $B$ is defined to be
\[
\boldsymbol{d_G(A,B)}:=\frac{e_G(A,B)}{|A||B|}.
\]
We often write $d(A,B)$ if this is unambiguous. Given $\varepsilon>0$, we say that $G$ is \textbf{$\boldsymbol{\varepsilon}$-regular} if for all subsets $X\subseteq A$ and $Y\subseteq B$ with $|X|>\varepsilon|A|$ and $|Y|>\varepsilon|B|$ we have that $|d(X,Y)-d(A,B)|<\varepsilon$. Similarly, a directed pair $(X,Y)$ is \textbf{$\boldsymbol{\varepsilon}$-regular} if
for all $X'\subseteq X$ and $Y'\subseteq Y$ with
$|X'|\geq\varepsilon|X|$ and $|Y'|\geq\varepsilon|Y|$, we have $|d_D(X',Y')-d_D(X,Y)|<\varepsilon.$ It is \textbf{$\boldsymbol{(\varepsilon,d)}$-regular} if it is $\varepsilon$-regular and
$d_D(X,Y)\geq d$. The following Slicing Lemma is required.

\begin{lemma}[Koml\'os and Simonovits~\cite{KomlosSimonovits}, Slicing Lemma]\label{lem:slicing}
Assume $(V_1,V_2)$ is $\varepsilon$-regular with density $\beta$. For some $\alpha\ge \varepsilon$, let $V_1'\subseteq V_1$ with $|V_1'|\ge \alpha|V_1|$ and $V_2'\subseteq V_2$ with $|V_2'|\ge \alpha|V_2|$. Then $(V_1',V_2')$ is $\varepsilon'$-regular with $\varepsilon':=\max\{2\varepsilon,\varepsilon/\alpha\}$ and for its density $\beta'$ we have $|\beta'-\beta|<\varepsilon$.
\end{lemma}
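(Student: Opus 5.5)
The Slicing Lemma follows directly from the definition of $\varepsilon$-regularity, by comparing densities of large subpairs with the density of the whole pair. Let $\beta'=d(V_1',V_2')$.

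\textbf{Density estimate.} Since $\alpha\ge\varepsilon$, we have $|V_1'|\ge\alpha|V_1|\ge\varepsilon|V_1|$ and similarly for $V_2'$. So $(V_1',V_2')$ is itself an admissible test pair in the definition of regularity of $(V_1,V_2)$. Hence $|\beta'-\beta|<\varepsilon$. If the definition uses strict inequalities $|X|>\varepsilon|A|$ and the case $\alpha=\varepsilon$ is borderline, I would either adopt the non-strict convention used for directed pairs in this paper or handle the equality case by a trivial limiting remark.

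\textbf{Regularity with $\varepsilon'=\max\{2\varepsilon,\varepsilon/\alpha\}$.} Take $X\subseteq V_1'$ and $Y\subseteq V_2'$ with $|X|\ge\varepsilon'|V_1'|$ and $|Y|\ge\varepsilon'|V_2'|$. Then
\[
|X|\ge \frac{\varepsilon}{\alpha}\,|V_1'| \ge \frac{\varepsilon}{\alpha}\,\alpha|V_1| = \varepsilon|V_1|,
\]
and likewise $|Y|\ge\varepsilon|V_2|$. Regularity of $(V_1,V_2)$ therefore gives $|d(X,Y)-\beta|<\varepsilon$. By the triangle inequality,
\[
|d(X,Y)-\beta'|\le|d(X,Y)-\beta|+|\beta-\beta'|<2\varepsilon\le\varepsilon'.
\]
This is exactly $\varepsilon'$-regularity of $(V_1',V_2')$.

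\textbf{Main obstacle.} There is no substantive difficulty. The only points needing care are the strict versus non-strict size thresholds in the definition and the role of the $2\varepsilon$ term. That term absorbs the two density deviations, since both $d(X,Y)$ and $\beta'$ are compared with the common value $\beta$. The same argument applies verbatim to directed pairs $(X,Y)$, counting only arcs from $X$ to $Y$. This directed version is the form in which the lemma will be used with the diregularity lemma.
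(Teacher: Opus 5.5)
The paper gives no proof of this lemma; it only cites Koml\'os and Simonovits. Your argument is the standard one for that cited result, and it is correct under the non-strict size convention $|X|\ge\varepsilon|A|$, which is the convention the paper uses for directed pairs. The argument runs as follows: $(V_1',V_2')$ is itself a test pair for $(V_1,V_2)$; any $X$ with $|X|\ge\varepsilon'|V_1'|$ satisfies $|X|\ge\varepsilon|V_1|$; the triangle inequality then gives the bound $2\varepsilon$.

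You should drop the fallback ``trivial limiting remark''. Under the strict convention $|X|>\varepsilon|A|$ (the one the paper writes for bipartite graphs), the density claim genuinely fails when $\alpha=\varepsilon$. For example, take $\varepsilon=\alpha=1/2$, $V_1=\{a_1,a_2\}$, $V_2=\{b_1,b_2\}$, and the single edge $a_1b_1$. The only strict test pair is $(V_1,V_2)$ itself, so the pair is $\varepsilon$-regular with $\beta=1/4$. Yet $V_1'=\{a_1\}$ and $V_2'=\{b_1\}$ meet the size requirements and have $\beta'=1$, so $|\beta'-\beta|=3/4\not<\varepsilon$.

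This is why Koml\'os and Simonovits assume $\alpha>\varepsilon$. To make the proof complete, either commit to the non-strict convention or strengthen the hypothesis to $\alpha>\varepsilon$. With either fix the rest of your argument goes through unchanged.
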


Given partitions $V_0,V_1,\dots,V_k$ and $U_1,\dots,U_\ell$ of the vertex set of some graph, we say that \textbf{$\boldsymbol{V_0,V_1,\dots,V_k}$ refines $\boldsymbol{U_1,\dots,U_\ell}$} if for all $V_i$ with $i\ge 1$ there exists some $U_j$ for some $j\in[\ell]$ that contains $V_i$. Note that $V_0$ need not be contained in any $U_j$, so this is weaker than the usual notion of refinement of partitions.

We shall use the following standard variant of the degree form of the Diregularity Lemma, in which both a lower bound on the number of clusters and a bounded initial partition are prescribed; it follows from the usual degree form by the standard refinement argument.

\begin{lemma}[Young \cite{Young2005}, Degree form of the Diregularity lemma]\label{regular-lemma}
For every $\varepsilon > 0$ and positive integers $M_0,q$ there are numbers $M$ and $n_0$ such that if
\begin{itemize}
    \item $G$ is a digraph on $n \geq n_0$ vertices,
    \item $\{U_1,\dots,U_{\ell}\}$ is a partition of the vertices of $G$ with $\ell \leq q$,  and
    \item $\beta \in [0,1]$ is any real number,
\end{itemize}
then there is a partition of the vertices of $G$ into $V_0,V_1,\dots,V_k$ and a spanning subdigraph $G'$ of $G$ such that the following hold:
\begin{itemize}
    \item  $M_0 \leq k \leq M$;
    \item $|V_0| \leq \varepsilon n$;
    \item $|V_1| = \cdots = |V_k| = m:=\frac{n-|V_0|}{k}$;
    \item  $V_0,V_1,\dots,V_k$ refines the partition $U_1,\dots,U_{\ell}$;
    \item $d^+_{G'}(x) > d^+_G(x) - (\beta + \varepsilon)n$ for all vertices $x \in V(G)$;
    \item $d^-_{G'}(x) > d^-_G(x) - (\beta + \varepsilon)n$ for all vertices $x \in V(G)$;
    \item for all $i = 1,\dots,k$ the digraph $G'[V_i]$ is empty;
    \item for all integers $1\le i,j\le k$ with $i\neq j$, the bipartite graph with vertex classes $V_i$ and $V_j$ and edge set $E_{G'}(V_i,V_j)$ is $\varepsilon$-regular and has density either $0$ or at least $\beta$.
\end{itemize}
\end{lemma}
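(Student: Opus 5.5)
We derive this from the Alon--Shapira diregularity lemma in its partition form: for every $\varepsilon'>0$ and integers $M_0',q$, every digraph on $n$ vertices has an equipartition $V_0',V_1',\dots,V_{k}'$ with $|V_0'|\le\varepsilon' n$ and $M_0'\le k\le M'$. Each $V_i'$ with $i\ge1$ lies inside some class $U_j$ of a prescribed partition with at most $q$ parts. All but at most $\varepsilon' k^2$ ordered pairs $(V_i',V_j')$ are $\varepsilon'$-regular. The initial-partition refinement is obtained, as in the undirected case, by starting the index-increment iteration from the common refinement of the $U_j$ with an arbitrary equipartition into $M_0'$ parts. Vertices left over from rounding go into the exceptional set. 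We fix the hierarchy $1/n_0\ll 1/M\ll \varepsilon'\ll\varepsilon,1/M_0,1/q$ and apply this with $M_0':=\max\{M_0,\lceil 2/\varepsilon\rceil\}$, so that every cluster has size at most $\varepsilon n/2$.

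Next we clean the partition. We delete from $G$ all arcs inside clusters, which costs each vertex at most $n/M_0'\le \varepsilon n/2$. We also delete all arcs lying in irregular pairs. The total number of such arcs is at most $\varepsilon' k^2 m^2\le\varepsilon' n^2$. Hence at most $2\sqrt{\varepsilon'}\,n$ vertices lose more than $\sqrt{\varepsilon'}\,n$ out- or in-arcs in this way; call them \emph{bad}. A cluster containing more than $(\varepsilon')^{1/4}m$ bad vertices is moved entirely into the exceptional set, and there are at most $2(\varepsilon')^{1/4}k$ such clusters. From every other cluster we move all bad vertices, plus arbitrary extra vertices, into the exceptional set, so that all surviving clusters have the common size $\lceil(1-(\varepsilon')^{1/4})m\rceil$. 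The new exceptional set $V_0$ then has size at most $\varepsilon' n+3(\varepsilon')^{1/4}n\le\varepsilon n$. Refinement of $U_1,\dots,U_\ell$ is preserved, because we only shrank clusters.

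By the Slicing Lemma (Lemma~\ref{lem:slicing}) with $\alpha=1-(\varepsilon')^{1/4}\ge1/2$, every surviving pair that was $\varepsilon'$-regular is $2\varepsilon'$-regular, and therefore $\varepsilon$-regular. Its density changes by less than $\varepsilon'$. Finally we delete all arcs in pairs of (new) density less than $\beta$. This makes every pair have density $0$ or at least $\beta$, while keeping regularity, since an empty pair is trivially regular. It costs each vertex at most $\beta n$ arcs, because each such pair contributes fewer than $\beta m$ arcs at a vertex on average. To make this a genuine per-vertex bound, we apply the usual trick of deleting pairs of density less than $\beta+\varepsilon'$ before slicing. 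A vertex in a regular pair has degree into a cluster close to the density times $m$, except for at most $\varepsilon' m$ atypical vertices, and these can be added to the bad set at the previous step. Arcs to and from $V_0$ are never deleted, so $G'$ is spanning and the degree loss for every vertex is at most $\varepsilon n/2+\sqrt{\varepsilon'}\,n+(\beta+\varepsilon')n<(\beta+\varepsilon)n$. For vertices placed into $V_0$ there is nothing else to check, since they lose only the arcs deleted above.

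We expect the main obstacle to be the bookkeeping that turns the averaged counts (irregular pairs, low-density pairs) into the required per-vertex degree bound for every $x\in V(G)$, including bad vertices. The cleanest route is to note that only the arcs we actually delete count against a vertex, and to charge each deletion type separately as above. If the per-vertex bound for low-density pairs proves awkward, we would instead define bad vertices with respect to the union of the irregular and low-density deletions, and use the same averaging to move them to $V_0$.
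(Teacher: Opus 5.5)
\textbf{Gap: the per-vertex bound for sparse pairs.} Your outline is the standard derivation, which the paper itself only alludes to by citing Young and ``the standard refinement argument''. Your handling of irregular pairs, the trimming step and the slicing step are fine. Note that $\varepsilon'\ll 1/M_0$ also guarantees that at least $M_0$ clusters survive the discarding. The problem is the step you flagged yourself: the per-vertex loss from deleting sparse pairs. Neither of your two patches works.

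\emph{First patch.} You propose adding to the bad set every vertex that is atypical for some regular pair. Fix a cluster $V_i$. Each of its $k-1$ partners may have its own set of nearly $\varepsilon' m$ atypical vertices in $V_i$. Since $k$ can be as large as $M'$, far bigger than $1/\varepsilon'$, these sets can together cover all of $V_i$, and then $|V_0|\le\varepsilon n$ is lost.

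\emph{Second patch.} You propose ``the same averaging'' as for irregular pairs. This fails because sparse pairs may carry about $(\beta+\varepsilon')n^2$ arcs, not $O(\varepsilon' n^2)$. Markov's inequality then only says that at most roughly $\frac{\beta}{\beta+\varepsilon}n$ vertices lose more than $(\beta+\varepsilon)n$, which is not a small set.

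\textbf{The fix: count atypicality incidences, not atypical vertices.} For $v\in V_i$, let $a^+(v)$ be the number of $j$ such that $(V_i,V_j)$ is $\varepsilon'$-regular and $d^+(v,V_j)\ge (d_{ij}+\varepsilon')m$. Define $a^-(v)$ symmetrically using the pairs $(V_j,V_i)$.
\begin{itemize}
\item Regularity gives fewer than $\varepsilon' m$ such $v$ for each $j$, so $\sum_{v\in V_i}a^{\pm}(v)<\varepsilon' mk$.
\item Hence fewer than $2\sqrt{\varepsilon'}\,m$ vertices of $V_i$ have $a^+(v)>\sqrt{\varepsilon'}\,k$ or $a^-(v)>\sqrt{\varepsilon'}\,k$. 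Declare only these bad, in addition to your irregular-pair bad vertices; your trimming then goes through unchanged.
\item Every surviving vertex loses, in each direction, at most
$(\beta+2\varepsilon')km+\sqrt{\varepsilon'}\,km\le(\beta+2\varepsilon'+\sqrt{\varepsilon'})n$
arcs to pairs whose original density is below $\beta+\varepsilon'$. Shrinking clusters only decreases these degrees, so the bound survives trimming.
\end{itemize}

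\textbf{A second point: when to define $G'$.} Define $G'$ only after the final partition is fixed. Delete the arcs inside final clusters and in discarded pairs of final clusters, and keep every arc meeting $V_0$. Your last sentence instead charges to the vertices moved into $V_0$ ``the arcs deleted above''. Those vertices were moved precisely because that loss could be large, so as written the degree condition can fail for them.
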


The vertex sets $V_1,\dots,V_k$ are called \textbf{clusters}, $V_0$ is called the \textbf{exceptional set} and the vertices in $V_0$ are called \textbf{exceptional vertices.} The last condition of the lemma says that all pairs of clusters are $\varepsilon$-regular in both directions (but possibly with different densities). We call the spanning digraph $G' \subseteq G$ given by the Diregularity lemma \textbf{the pure digraph}.

\begin{lemma}[Kelly et al.\ \cite{KellyKuhnOsthus}]\label{reduce-lemma}
For every $\varepsilon \in (0,1)$, there exist numbers $N:=N(\varepsilon)$ and $n_0=n_0(\varepsilon)$ such that the following holds.
Let $\beta \in [0,1]$ with $\varepsilon \leq \beta/2$, let $G$ be an oriented graph of order $n \geq n_0$ and let $R'$ be the reduced digraph with parameters $(\varepsilon,\beta)$ obtained by applying the Diregularity lemma to $G$ with $N$ as the lower bound on the number of clusters.
Then $R'$ has a spanning oriented subgraph $R$ such that  $\delta^+(R) \geq \bigl(\delta^+(G)/|G| - \beta - 3\varepsilon\bigr)|R|$ and  $\delta^-(R) \geq \bigl(\delta^-(G)/|G| - \beta - 3\varepsilon\bigr)|R|$.
\end{lemma}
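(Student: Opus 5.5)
The statement is Lemma~\ref{reduce-lemma}. I would prove it by starting from the reduced digraph $R'$, which may contain $2$-cycles, and deleting one arc from each $2$-cycle in a balanced way, so that every cluster keeps most of its out- and in-degree.

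\textbf{Step 1: degree bounds in $R'$.} Apply Lemma~\ref{regular-lemma} to $G$ with $M_0:=N$. This gives clusters $V_1,\dots,V_k$ and the pure digraph $G'$. Let $R'$ be the digraph on $[k]$ with $ij\in A(R')$ whenever $d_{G'}(V_i,V_j)\ge\beta$. Every arc of $G'$ leaving a cluster $V_i$ goes to $V_0$ or into clusters $V_j$ with $ij\in A(R')$. Summing over $x\in V_i$ gives
\[
\sum_{j:\,ij\in A(R')} e_{G'}(V_i,V_j)\ \ge\ m\bigl(\delta^+(G)-(\beta+\varepsilon)n-\varepsilon n\bigr).
\]
Using $e_{G'}(V_i,V_j)\le m^2$ and $km\le n$, this yields $d^+_{R'}(i)\ge(\delta^+(G)/n-\beta-2\varepsilon)k$. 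The in-degree bound is symmetric.

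\textbf{Step 2: double arcs carry little weight.} Because $G$ is oriented, $e_G(V_i,V_j)+e_G(V_j,V_i)\le m^2$. Set $w_{ij}:=d_{G'}(V_i,V_j)$. Then $w_{ij}+w_{ji}\le 1$ for all $i\ne j$, and a double arc $\{ij,ji\}$ of $R'$ has both weights at least $\beta$. The weighted count from Step 1 is actually the stronger statement $\sum_j w_{ij}\ge(\delta^+(G)/n-\beta-2\varepsilon)k$. So the goal is an oriented subgraph $R$ whose $0/1$ out-degree is at least the weighted out-degree minus $\varepsilon k$, and similarly for in-degrees.

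\textbf{Step 3: choose the orientation of each double arc.} The natural route is a randomized rounding. For each double arc, keep $ij$ with probability $w_{ij}/(w_{ij}+w_{ji})$ and keep $ji$ otherwise; single arcs are kept. Let $S_i$ be the set of $j$ forming a double arc with $i$. Since $w_{ij}\le w_{ij}/(w_{ij}+w_{ji})$, we get
\[
\mathbb E\, d^+_R(i)\ \ge\ \sum_{j\notin S_i,\ ij\in A(R')} 1+\sum_{j\in S_i} w_{ij}\ \ge\ \sum_j w_{ij}.
\]
The in-degree is handled the same way. Chernoff bounds give deviations $O(\sqrt{k\log k})\le\varepsilon k$ once $k\ge N(\varepsilon)$ is large, and a union bound over the $2k$ events finishes the argument. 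Combining this with Step 1 gives $\delta^{\pm}(R)\ge(\delta^{\pm}(G)/n-\beta-3\varepsilon)k$.

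\textbf{Main obstacle.} The delicate point is the concentration in Step 3. One must check that the random choices are independent across double arcs, which they are, and that the loss is absorbed by the spare $\varepsilon$; this is why $N$ must be large in terms of $\varepsilon$. A deterministic Eulerian-type balancing would be an alternative, but the random argument is the first thing I would try.
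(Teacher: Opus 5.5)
Your proposal is correct and follows essentially the argument of Kelly, K\"uhn and Osthus, from which the paper imports this lemma without proof: each double arc $\{ij,ji\}$ of $R'$ is resolved at random with probabilities proportional to $d_{G'}(V_i,V_j)$ and $d_{G'}(V_j,V_i)$, the orientedness of $G$ gives $d_{G'}(V_i,V_j)+d_{G'}(V_j,V_i)\le 1$ so that the expected semidegrees dominate the weighted ones, and Chernoff plus a union bound over the $2k$ degree events (which is where $k\ge N(\varepsilon)$ enters) costs only the final $\varepsilon k$. Your argument never uses the hypothesis $\varepsilon\le\beta/2$, and that is fine, since the two degree bounds stated here do not need it.
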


The oriented graph $R$ given by Lemma \ref{reduce-lemma} is called the \textbf{reduced oriented graph} with parameters $(\varepsilon,\beta)$. Notably, the reduced graph $R$ inheriting these degree bounds is guaranteed to be oriented only if the host graph is an oriented graph. This is the primary motivation for restricting our host graph setting to oriented graphs in this paper.

\subsection{Lattice‑based absorbing method}
Another useful tool is the lattice‑based absorbing method~\cite{HMWY2024}, for which we first require the notion of $H$-reachability introduced in~\cite{HMWY2024} and originating from~\cite{LoMarkstrom}.

\begin{definition}[Reachability and closedness]
Let $m,t\in\N$,  $G$ and $H$ be two (resp., oriented) graphs, and let $u,v\in V(G)$ be distinct.  We say that $u$ and
$v$ are \textbf{$\boldsymbol{(H,m,t)}$-reachable} if, for every set
\[
 W\subseteq V(G)\setminus\{u,v\}
 \quad\text{with}\quad |W|\leq m,
\]
there exists a set
\[
 Q\subseteq V(G)\setminus(W\cup\{u,v\}),
 \qquad |Q|\leq ht-1,
\]
such that both $G[Q\cup\{u\}]$ and $G[Q\cup\{v\}]$ contain
$ H$-factors.  Such a set $Q$ is called a \textbf{$  \boldsymbol{H}$-connector} for
$u$ and $v$. A set $U\subseteq V(G)$ is \textbf{$\boldsymbol{( H,m,t)}$-closed} if every two distinct
vertices of $U$ are $(H,m,t)$-reachable in $G$.  If two vertices $u,v\in V(G)$ are $(H,m,1)$‑reachable, then we say \textbf{$\boldsymbol{u}$ is $1$-reachable to $\boldsymbol{v}$}.
\end{definition}

Reachability is monotone, that is, if $m'\leq m$ and $t'\geq t$, then every
$(H,m,t)$-reachable pair is also
$(H,m',t')$-reachable.

We also use the following notion introduced by  Keevash and Mycroft~\cite{KeevashMycroft}. Let $G$ be an $n$-vertex (resp., oriented) graph with a vertex partition $\mathcal{P}=\{V_1,\dots,V_p\}$ of $V(G)$, for some integer $p\ge 1$.  For a set
$X\subseteq V(G)$, define
\[
 \boldsymbol{\bfi_{\cP}(X)}
 :=\bigl(|X\cap V_1|,\ldots,|X\cap V_p|\bigr).
\]
For $i\in[p]$, let $\boldsymbol{\bfe_i}$ denote the $i$-th unit vector, that is, for each $1\le k\le p$, the $k$-th coordinate of $\mathbf{e}_i$ satisfies
\[
(\mathbf{e}_i)_k=
\begin{cases}
1 & k=i,\\
0 & k\neq i.
\end{cases}
\]
The vector $\mathbf{e}_i$ thus represents the pattern of taking exactly one vertex from the partition block $V_i$. Hence, the vector $\mathbf{t}=s\mathbf{e}_i+s\mathbf{e}_j$ represents the condition $|X\cap V_i|=s$, $|X\cap V_j|=s$, and $|X\cap V_l|=0$ for all $l\in [p]\setminus \{i,j\}$.  A \textbf{transferral} is a vector of the form $\mathbf{e}_i-\mathbf{e}_j$ for some distinct $i,j\in[p]$. A vector $\mathbf{v}\in\mathbb{Z}^p$ is an \textbf{$\boldsymbol{s}$-vector} if all its coordinates are non-negative and their sum is $s$.

\begin{definition}[Robust index vector]
 Given $\mu>0$ and an $h$-vertex (resp., oriented) graph $H$, an $h$-vector $\mathbf{v}$ is \textbf{$\boldsymbol{(H,\mu)}$‑robust} (briefly, $\textbf{$\boldsymbol{\mu}$‑robust} $) with respect to $\cP$ if, for every
$W\subseteq V(G)$ with $|W|\leq\mu n$, the (resp., oriented) graph $G-W$
contains a copy $F$ of $H$ satisfying $\bfi_{\cP}(V(F))=\mathbf v.$
Let $\boldsymbol{\calI_{ H}^{\mu}(\cP)}$ (briefly, $\boldsymbol{\calI^{\mu}(\cP)}$) be the set of all $(H,\mu)$‑robust $h$-vectors.
\end{definition}

In undirected graphs, the following closed partition lemma (Lemma~\ref{partition-graph}), transferral merging lemma (Lemma~\ref{lem:4.9}), absorber‑production lemma (Lemma~\ref{absorber-graph}), and absorbing‑set lemma (Lemma \ref{absorb-graph}) constitute fundamental and key ingredients of the absorption method.

\begin{lemma}
[Han et al. {\cite[Lemma 4.1]{HMWY2024}}]\label{partition-graph} For any positive constants $\gamma_1,\beta_1$, $h\in \mathbb{N}$ with $h\ge 3$ and an $h$-vertex graph $H$, there exist $\beta_2=\beta_2(\gamma_1,\beta_1,h)>0$ and $t_2\in \mathbb{N}$ such that the following holds for sufficiently large $n$. Let $G$ be an $n$-vertex graph such that every vertex in $V(G)$ is $(H,\beta_1 n,1)$-reachable to at least $\gamma_1 n$ other vertices. Then there is a partition $\mathcal{P}=\{V_1,\dots,V_p\}$ of $V(G)$ with $p\le \lceil \tfrac{1}{\gamma_1}\rceil$ such that for each $i\in [p]$, $V_i$ is $(H,\beta_2 n,t_2)$-closed and $|V_i|\ge \tfrac{\gamma_1}{2}n$.
\end{lemma}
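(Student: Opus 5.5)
The argument follows the Lo--Markstr\"om/Han et al.\ scheme: build maximal closed classes greedily, then merge classes using the transitivity of reachability through common neighbours. Constants are chosen with $1/n\ll\beta_2\ll\beta_1,\gamma_1$, and $t_2$ large in terms of $1/\gamma_1$.

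The basic tool is a transitivity estimate. Suppose $u$ is $(H,\beta n,t)$-reachable to $w$ and $w$ is $(H,\beta n,t')$-reachable to $v$. Then $u$ and $v$ are $(H,\beta n - ht',t+t')$-reachable. To see this, fix a forbidden set $W$ of size at most $\beta n-ht'$. First take a connector $Q_2$ for the pair $(w,v)$ that avoids $W\cup\{u\}$. Then take a connector $Q_1$ for the pair $(u,w)$ that avoids $W\cup Q_2\cup\{v\}$. The set $Q_1\cup\{w\}\cup Q_2$ has size at most $h(t+t')-1$. Moreover, it carries an $H$-factor together with $u$: use the factor on $Q_1\cup\{u\}$ and the factor on $Q_2\cup\{w\}$. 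It likewise carries one together with $v$: use the factor on $Q_1\cup\{w\}$ and the factor on $Q_2\cup\{v\}$. Because the chain length will be bounded in terms of $1/\gamma_1$, the loss in the first parameter is only $O(h t_2)=o(n)$.

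Next comes a pigeonhole step. For each $v$, let $\tilde N(v)$ be the set of vertices $(H,\beta_1 n,1)$-reachable to $v$, so $|\tilde N(v)|\ge\gamma_1 n$. Any $\lceil 1/\gamma_1\rceil+1$ vertices contain two, $u$ and $v$, whose sets $\tilde N$ intersect in at least $c\gamma_1^2 n$ vertices, by counting. Such $u,v$ are then $(H,\beta_1n/2,2)$-reachable via any common $w$. The next step is to iterate at increasing levels $t$. Say $u,v$ are \emph{$(\beta,t)$-related} if they are $(H,\beta n,t)$-reachable. Let $\tilde N_{\beta,t}(v)$ denote the set of vertices $(\beta,t)$-related to $v$. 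As in Lo--Markstr\"om, either every two vertices with large common neighbourhood at level $t$ become related at level $2t$, or we locate a set of $\lceil 1/\gamma_1\rceil+1$ pairwise unrelated vertices with small pairwise intersections of their $\tilde N$'s. The latter is impossible, since each $\tilde N$ has size at least $\gamma_1 n$. Running this for at most $\lceil 1/\gamma_1\rceil$ rounds yields a level $t_*\le 2^{\lceil 1/\gamma_1\rceil}$ and $\beta_*$ such that the following holds. Every vertex is related at level $(\beta_*,t_*)$ to at least $\gamma_1 n$ others, and any two vertices whose $\tilde N_{\beta_*,t_*}$ intersect in at least $\gamma_1^2 n/4$ vertices are related at the next level.

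Now construct the partition. Pick a maximal family $v_1,\dots,v_p$ of pairwise unrelated vertices at the final level, and put $V_i:=\tilde N(v_i)$, resolving overlaps arbitrarily. Pairwise unrelatedness together with the dichotomy forces the sets $\tilde N(v_i)$ to overlap in fewer than $\gamma_1^2n/4$ vertices. Since each has size at least $\gamma_1 n$, we get $p\le\lceil 1/\gamma_1\rceil$. By maximality, every other vertex $x$ is related to some $v_i$, and we assign $x$ to such a class. Classes that are too small, of size below $\gamma_1 n/2$, are dissolved and their vertices reassigned. A vertex $x$ in a dissolved class has $\tilde N(x)$ of size at least $\gamma_1 n$, so at least $\gamma_1 n/2$ of these neighbours lie in surviving classes. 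Hence $x$ is related to a vertex of some surviving class, and by transitivity it is related to that class's centre. Closedness of each $V_i$ then follows from transitivity through $v_i$, since any two members of $V_i$ are each related to $v_i$. This gives $(H,\beta_2 n,t_2)$-closed parts with $t_2:=2t_*+2$ and $\beta_2:=\beta_*/2$.

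The main obstacle is the bookkeeping in the iteration. We must make sure the dichotomy terminates after boundedly many rounds, which is what bounds $t_2$ and $p$. We must also ensure that the forbidden-set size $\beta n$ degrades only by constant factors, plus $O(ht)$, at each round. Since the number of rounds is at most $\lceil 1/\gamma_1\rceil$, this is fine, with $\beta_2$ depending only on $\gamma_1,\beta_1,h$.
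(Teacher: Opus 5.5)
\textbf{Note on the source.} The paper does not prove this lemma; it imports it from Han et al., whose proof follows the Lo--Markstr\"om/Han scheme. Measured against that scheme, your proposal has a genuine gap.

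\textbf{The gap.} It lies in your ``basic tool'': robust reachability is \emph{not} transitive through a single vertex. The forbidden set $W$ only has to avoid $u$ and $v$, so it may contain $w$, and then $Q_1\cup\{w\}\cup Q_2$ meets $W$. This is not a technicality. Let $G$ consist of two cliques $A,B$ of order about $n/2$ sharing exactly one vertex $w$, and let $H=K_3$. Every vertex is $(K_3,\beta_1 n,1)$-reachable to about $n/2$ others, and $w$ is reachable to \emph{every} vertex. Yet for $u\in A\setminus\{w\}$, $v\in B\setminus\{w\}$ and $W=\{w\}$ there is no connector of any size. Indeed, in $G-w$ a $K_3$-factor of $G[Q\cup\{u\}]$ forces $|Q\cap A|\equiv 2 \pmod 3$, while one of $G[Q\cup\{v\}]$ forces $|Q\cap A|\equiv 0\pmod 3$. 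In this graph $\{w\}$ is a maximal family of pairwise unrelated vertices, so your construction may output the single class $V(G)$, which is not closed.

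Hence both steps that route through one vertex fail: ``closedness of $V_i$ follows from transitivity through $v_i$'', and re-attaching a vertex of a dissolved class ``to that class's centre''. Transitivity is valid only when $u,v$ have more than $|W|$ common intermediaries, so that $w$ can be chosen outside $W$. That version is fine in your pigeonhole step, with two corrections. First, the counting gives an overlap of order $\gamma_1^3 n$, not $\gamma_1^2 n$. Second, the new reachability parameter must lie below that overlap, not merely below $\beta_1/2$. Similarly, pairwise overlaps below $\gamma_1^2n/4$ do not force $p\le\lceil 1/\gamma_1\rceil$. Your ``dichotomy'' also does no work, since its first alternative always holds by the corrected transitivity.

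\textbf{The missing idea.} Closedness inside a class should come from \emph{maximality}, not from transitivity. Put $c=\lceil 1/\gamma_1\rceil+1$ and fix reachability levels $L_1,\dots,L_c$ with the following properties:
\begin{itemize}
\item the levels get stronger as the index grows;
\item two vertices with at least $\eta n$ common $L_{d+1}$-intermediaries are $L_d$-reachable (one application of the corrected transitivity);
\item any $c$ vertices contain an $L_c$-reachable pair (your pigeonhole step).
\end{itemize}

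Let $d$ be the largest integer for which some $v_1,\dots,v_d$ are pairwise not $L_d$-reachable. Let $N_i$ be the set of vertices $L_{d+1}$-reachable to $v_i$. Then:
\begin{itemize}
\item Maximality applied to $\{x,v_1,\dots,v_d\}$ places every $x$ in some $N_i$.
\item Transitivity gives $|N_i\cap N_j|<\eta n$. This yields $p=d\le c-1$ and exclusive parts of size at least $(\gamma_1-d\eta)n\ge\gamma_1 n/2$, so no dissolving is needed.
\item If $x,y\in N_i\setminus\bigcup_{j\ne i}N_j$, apply maximality to the $d+1$ vertices $\{x,y\}\cup\{v_j:j\ne i\}$. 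No pair other than $\{x,y\}$ can be $L_{d+1}$-reachable, so $x,y$ are.
\item The few overlap vertices are attached to a class via linearly many intermediaries, using chains $x\to w\to w'\to y$ with $w,w'\notin W$, at the cost of a bounded increase in $t$.
\end{itemize}
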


Lemma \ref{lem:4.9} allows us to iteratively merge two distinct parts into a closed one, given the existence of a
transferral.
\begin{lemma}
[Han et al. {\cite[Lemma 4.4]{HMWY2024}}]\label{lem:4.9} Given $h,t\in \mathbb{N}$ with $h\ge 3$, an $h$-vertex graph $H$ and $\beta>0$, the following holds for sufficiently large $n$. Let $G$ be an $n$-vertex graph with a partition $\mathcal{P}=\{V_1,\dots,V_p\}$ such that each $V_i$ is $(H,\beta n,t)$-closed. For distinct $i,j\in [p]$, if there exist two $(H,\beta)$‑robust $h$-vectors $\mathbf{s},\mathbf{t}$ such that $\mathbf{s}-\mathbf{t}=\mathbf{e}_i-\mathbf{e}_j$, then $V_i\cup V_j$ is $\bigl(H,\tfrac{\beta n}{2},2ht\bigr)$-closed.
\end{lemma}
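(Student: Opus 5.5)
The plan is to argue directly from the definitions of reachability and robustness, as in the proof of the corresponding statement in~\cite{HMWY2024}. The aim is to show that every pair $u\in V_i$, $w\in V_j$ is $(H,\beta n/2,2ht)$-reachable; pairs within the same part are already handled by the closedness of $V_i$ and $V_j$ together with monotonicity. Fix such $u,w$ and a forbidden set $W$ with $|W|\le \beta n/2$.

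First, using $\mathbf s-\mathbf t=\mathbf e_i-\mathbf e_j$, write $\mathbf s=\mathbf r+\mathbf e_i$ and $\mathbf t=\mathbf r+\mathbf e_j$ for a common nonnegative $(h-1)$-vector $\mathbf r$. By $\beta$-robustness of $\mathbf s$ applied with forbidden set $W\cup\{u,w\}$ (of size at most $\beta n$), I obtain a copy $F_1$ of $H$ with $\bfi_{\cP}(V(F_1))=\mathbf s$. Then, by robustness of $\mathbf t$ with forbidden set $W\cup\{u,w\}\cup V(F_1)$, I obtain a disjoint copy $F_2$ with index vector $\mathbf t$. Since $\mathbf s$ and $\mathbf t$ agree outside coordinates $i,j$, I can pair the vertices of $F_1$ and $F_2$ so that matched vertices lie in the same part. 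This leaves one vertex $x\in V(F_1)\cap V_i$ and one vertex $y\in V(F_2)\cap V_j$ unmatched, and produces $h-1$ pairs $(a_k,b_k)$ with $a_k\in V(F_1)\setminus\{x\}$ and $b_k\in V(F_2)\setminus\{y\}$ in the same part.

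Next, I build connectors greedily, avoiding everything used so far. The running forbidden set consists of $W$, $u$, $w$, $V(F_1)$, $V(F_2)$ and all previously chosen connectors, so it has size at most $\beta n/2+O(h^2t)\le \beta n$ for large $n$. I choose:
\begin{itemize}
\item a connector $Q_0$ for $u,x$ inside $V_i$ (closedness);
\item a connector $Q'_0$ for $y,w$ inside $V_j$;
\item connectors $Q_k$ for each pair $(a_k,b_k)$ in their common part.
\end{itemize}
Each connector has size at most $ht-1$.

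Now set $Q=V(F_1)\cup V(F_2)\cup Q_0\cup Q'_0\cup\bigcup_k Q_k$. The $H$-factor of $Q\cup\{u\}$ is assembled as follows:
\begin{itemize}
\item $Q_0\cup\{u\}$, using the connector property for $u$;
\item $F_1$;
\item $Q'_0\cup\{w\}$ is not available here, so instead use $Q'_0\cup\{y\}$, which frees $V(F_2)\setminus\{y\}=\{b_k\}$;
\item $Q_k\cup\{b_k\}$ for each $k$, covering all the $b_k$.
\end{itemize}
Checking coverage, every vertex of $Q\cup\{u\}$ is covered exactly once: $u$, $Q_0$, $V(F_1)$ (including $x$ and the $a_k$), $Q'_0$, $y$, each $Q_k$ and each $b_k$. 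Symmetrically, $Q\cup\{w\}$ has the factor $Q'_0\cup\{w\}$, $F_2$, $Q_0\cup\{x\}$, and $Q_k\cup\{a_k\}$ for each $k$. Finally, $|Q|\le 2h+(h+1)(ht-1)\le 2h\cdot ht-1$ when $h\ge3$ and $t\ge1$, which gives the required $(H,\beta n/2,2ht)$-reachability.

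The main obstacle is this size bookkeeping: verifying that $|Q|+1$ fits within the $h\cdot 2ht$ bound and that the accumulated forbidden sets stay below $\beta n$, so that robustness and closedness can be invoked at every step. A crude count gives about $h^2t+2h$, which is comfortably below $2h^2t$.
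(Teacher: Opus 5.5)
Your argument is correct and is essentially the standard proof from the cited source; the paper itself only quotes this lemma without proof. The structure is the same: two disjoint robust copies with index vectors $\mathbf s$ and $\mathbf t$, a part-respecting pairing that leaves $x\in V_i$ and $y\in V_j$, then $h+1$ greedily chosen connectors, giving $|Q|\le 2h+(h+1)(ht-1)\le 2h^2t-1$. The only blemishes are cosmetic: closedness provides connectors somewhere in $V(G)$ rather than inside $V_i$ or $V_j$ (you never use containment, so nothing breaks), and your closing ``crude count'' is really $h^2t+ht+h-1$, which your earlier exact inequality already handles.
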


The notation for absorbers and absorbing sets given in \cite{NenadovPehova2020} is adopted below.

\begin{definition}
Let $G$ be an $n$-vertex (resp., oriented) graph and let $H$ be an
$h$-vertex (resp., oriented) graph. Then
\begin{enumerate}
    \item a subset $A\subseteq V(G)$ is a \textbf{$\boldsymbol{\xi}$-absorbing set}
    for $H$ if, for every subset $U\subseteq V(G)\setminus A$ with
    $|U|\leq\xi n$ and $|A\cup U|\in h\mathbb N$, the (resp., oriented) graph
    $G[A\cup U]$ contains an $ H$-factor.

    \item For $S\in\binom{V(G)}h$ and $t\in\mathbb N$, a set
    $A_S\subseteq V(G)\setminus S$ is a \textbf{$\boldsymbol{(H, t)}$-absorber for $S$} if $|A_S|\leq h^2t$
    and both $G[A_S]$ and $G[A_S\cup S]$ contain $ H$-factors.
\end{enumerate}
\end{definition}

From the two lemmas below, one may conclude that every closed graph contains a small absorbing set.

\begin{lemma}[Han et al. {\cite[Lemma~3.10]{HMWY2024}}]\label{absorber-graph}
Given $\beta>0$, $t,h\in\mathbb{N}$ with $h\ge 3$ and an $h$-vertex graph $H$, the following holds for sufficiently large $n\in\mathbb{N}$. Let $G$ be an $n$-vertex graph such that $V(G)$ is $(H,\beta n,t)$‑closed. Then every $S\in \binom{V(G)}{h}$ has a family of at least $\dfrac{\beta}{h^2 t} n$ vertex‑disjoint $(H,t)$‑absorbers.
\end{lemma}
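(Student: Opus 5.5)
The plan is to build the absorbers one at a time by a greedy procedure; closedness of $V(G)$ guarantees that the procedure never gets stuck. Fix $S=\{s_1,\dots,s_h\}\in\binom{V(G)}{h}$. Since $n$ is large and $h\ge 3$, we can find a copy $F$ of $H$ in $G$ disjoint from $S$. The reason is that $H$ is a fixed graph and $V(G)$ is $(H,\beta n,t)$-closed: for any two vertices $u,v$ there is an $H$-connector $Q$ with $|Q|\le ht-1$ such that $G[Q\cup\{u\}]$ has an $H$-factor. Using this connector and avoiding $S$, we obtain a copy of $H$ outside $S$ (formally: pick $u,v\notin S$ and take $W=S$ in the reachability definition). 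Write $V(F)=\{f_1,\dots,f_h\}$.

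\emph{One absorber.} For each $i\in[h]$ we use that $s_i$ and $f_i$ are $(H,\beta n,t)$-reachable. This gives a connector $Q_i$ with $|Q_i|\le ht-1$ such that both $G[Q_i\cup\{s_i\}]$ and $G[Q_i\cup\{f_i\}]$ have $H$-factors. We choose the $Q_i$ one after another, each time forbidding the set $W_i:=S\cup V(F)\cup Q_1\cup\dots\cup Q_{i-1}$. This set has size at most $2h+h(ht-1)\le h^2t+h$, which is far below $\beta n$, so the reachability definition applies. Put $A_S:=V(F)\cup Q_1\cup\dots\cup Q_h$. Then $|A_S|\le h+h(ht-1)=h^2t$, as the definition requires. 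Both factors are obtained by combining pieces:
\begin{itemize}
\item $G[A_S]$ has an $H$-factor: take $F$ together with the $H$-factors of $G[Q_i\cup\{f_i\}]$ for $i\in[h]$. The point here is to use $F$ as a copy of $H$ and the $Q_i$ with $s_i$ replaced. More precisely, $A_S=\bigcup_i(Q_i\cup\{f_i\})$ is a disjoint union, so it is covered by the factors of $G[Q_i\cup\{f_i\}]$.
\item $G[A_S\cup S]$ has an $H$-factor: take the copy $F$ together with the factors of $G[Q_i\cup\{s_i\}]$. These cover $V(F)\cup\bigcup_i(Q_i\cup\{s_i\})=A_S\cup S$, again disjointly.
\end{itemize}

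\emph{Many disjoint absorbers.} We repeat the construction greedily. Suppose disjoint absorbers $A^1,\dots,A^r$ for $S$ have been found with $r<\frac{\beta}{h^2t}n$. Let $U:=A^1\cup\dots\cup A^r$, so $|U|\le r h^2t<\beta n$. We rerun the construction while adding $U$ to every forbidden set. Finding $F$ disjoint from $S\cup U$ again uses a connector avoiding $W=S\cup U$. That set has size at most $\beta n$ up to a negligible $h$; this can be absorbed by aiming for $r<\frac{\beta}{h^2t}n-1$, or, more cleanly, by applying reachability with $W$ of size at most $\beta n$ and trimming constants. The $Q_i$ likewise avoid $W_i\cup U$, whose size is at most $rh^2t+h^2t+h\le\beta n$ as long as $r\le \frac{\beta n}{h^2t}-2$. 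Hence the process continues until roughly $\frac{\beta}{h^2t}n$ absorbers have been produced.

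\emph{Main obstacle.} The only delicate point is bookkeeping the size of the forbidden set against the reachability parameter $\beta n$, together with the off-by-a-constant issue in the final count. If the count must be exactly $\frac{\beta}{h^2t}n$, I would first replace $\beta$ by a slightly larger effective budget. Alternatively, I would observe that each absorber uses at most $h^2t-h^2+h$ vertices outside $S$ when $t\ge 1$, which gives the needed slack for large $n$.
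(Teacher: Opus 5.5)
Your construction is the standard argument behind this lemma. The paper gives no proof of its own and only cites Han et al. The argument runs as follows: take a copy $F$ of $H$ avoiding $S$, extracted from a connector. Take connectors $Q_i$ for the pairs $(s_i,f_i)$. The absorber is $A_S=V(F)\cup Q_1\cup\dots\cup Q_h$, of size at most $h^2t$, and you repeat greedily. The single-absorber part is correct, with one correction. Delete the first sentence of your first bullet: $F$ cannot be used alongside the factors of $G[Q_i\cup\{f_i\}]$, because those factors already cover $V(F)$. Your ``more precisely'' sentence is the right argument.

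The flaw is in the final count. As written, your bookkeeping gives only about $\frac{\beta}{h^2t}n-2$ absorbers, and neither proposed patch works.
\begin{itemize}
\item You cannot ``replace $\beta$ by a slightly larger budget''. $(H,\beta n,t)$-closedness is the hypothesis, and it says nothing about forbidden sets larger than $\beta n$.
\item The claim that each absorber uses at most $h^2t-h^2+h$ vertices outside $S$ is false. Absorbers are disjoint from $S$ by definition. Moreover $|Q_i\cup\{s_i\}|\in h\mathbb N$, so for $t=1$ every $Q_i$ has exactly $h-1$ vertices. Hence $|A_S|=h^2$, not at most $h$.
\end{itemize}

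The real fix is sharper bookkeeping; your bound $rh^2t+h^2t+h$ is wasteful in two ways. First, the definition of reachability requires the forbidden set to exclude $s_i$ and $f_i$. Second, only $Q_1,\dots,Q_{i-1}$ have been chosen when you pick $Q_i$. So when building the $(r+1)$-st absorber, $Q_i$ must avoid
\[
\bigl(S\cup U\cup V(F)\cup Q_1\cup\dots\cup Q_{i-1}\bigr)\setminus\{s_i,f_i\},
\]
a set of size at most
\[
2(h-1)+rh^2t+(h-1)(ht-1)=(r+1)h^2t-(t-1)h-1<(r+1)h^2t .
\]
Finding $F$ only requires avoiding $S\cup U$, which has size at most $rh^2t+h\le (r+1)h^2t$. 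Hence the $(r+1)$-st absorber exists whenever $(r+1)h^2t\le\beta n$. The greedy process therefore yields $\lfloor \beta n/(h^2t)\rfloor$ pairwise disjoint $(H,t)$-absorbers. This is the statement under the usual convention of suppressing rounding, and it is more than any application needs.
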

\begin{lemma}[Nenadov and Pehova {\cite[Lemma~2.2]{NenadovPehova2020}}] \label{absorb-graph}
Let $H$ be a graph with $h$ vertices, $\gamma>0$ and $t\in\mathbb{N}$ be constants. Then there exists $\xi:=\xi(h,t,\gamma)$ such that the following holds for sufficiently large $n$. Suppose that $G$ is a graph with $n$ vertices such that every $S\in \binom{V(G)}{h}$ has a family of at least $\gamma n$ vertex‑disjoint $(H,t)$‑absorbers. Then $G$ contains a $\xi$-absorbing set of size at most $\gamma n$.
\end{lemma}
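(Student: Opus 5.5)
We would follow the template-based absorption of Montgomery, as adapted by Nenadov and Pehova. The idea is to build $A$ from three pieces. The first is a small random "reservoir" $Z$, which contains absorbers for every $h$-set. The second is a family of $(h-1)$-sets $Y_i$. The third is a family of vertex-disjoint $(H,t)$-absorbers wired according to a robust bipartite template, so that any leftover part of $Z$ can be tiled. We fix constants $1/n\ll\xi\ll\zeta\ll\gamma,1/h,1/t$, and write $m:=|Z|/2$.

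\emph{Step 1 (random reservoir).} We include every vertex in $Z$ independently with probability $\zeta$. Fix $S\in\binom{V(G)}{h}$. The events "$A$ lies inside $Z$", over the $\gamma n$ vertex-disjoint absorbers $A$ of $S$, are independent, and each has probability at least $\zeta^{h^2t}$. So by Chernoff's inequality and a union bound over at most $n^h$ sets $S$, with high probability the following hold: $|Z|=(1\pm o(1))\zeta n$, and every $S$ (also those meeting $Z$) has at least $\tfrac12\gamma\zeta^{h^2t}n$ vertex-disjoint absorbers contained in $Z\setminus S$. We fix such a $Z$, relabel it as $Z=\{z_1,\dots,z_{2m}\}$, and discard a vertex or two to get an even size if needed.

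\emph{Step 2 (template).} We use Montgomery's robust bipartite template $T$. It has classes $I$ with $|I|=3m$ and $J\cup Z$ with $|J|=|Z|=2m$, and maximum degree $O(1)$. Its key property is that for every $Z'\subseteq Z$ with $|Z'|=m$, the graph $T-Z'$ has a perfect matching. We then do the following.
\begin{itemize}
\item Pick, outside $Z$, disjoint $(h-1)$-sets $Y_i$ for $i\in I$ and single vertices $z_j$ for $j\in J$.
\item For every edge $ij$ of $T$, choose greedily an $(H,t)$-absorber $A_{ij}$ for the $h$-set $Y_i\cup\{z_j\}$. These are pairwise disjoint and disjoint from $Z$ and from all $Y$'s and $z$'s.
\end{itemize}
The greedy choice is possible because only $O(m h^2t)=O(\zeta n)\ll\gamma n$ vertices are ever forbidden, while each $h$-set has $\gamma n$ disjoint absorbers, so one of them avoids the forbidden set. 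We set
\[
A:=Z\cup\bigcup_{i\in I}Y_i\cup\{z_j:j\in J\}\cup\bigcup_{ij\in E(T)}A_{ij}.
\]
Then $|A|=O(h^2t\,\zeta n)\le\gamma n$.

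\emph{Step 3 (absorbing $U$).} Let $U\subseteq V(G)\setminus A$ with $|U|\le\xi n$ and $|A\cup U|\in h\N$. One checks that $|A|$ minus the size of the template part covered by a perfect matching is divisible by $h$, and hence $h\mid |U|$; this is a routine count, since each matched edge $ij$ contributes $|A_{ij}\cup Y_i\cup\{z_j\}|$ and each unmatched edge contributes $|A_{ij}|$, both divisible by $h$. We split $U$ into $h$-sets. For each of these sets we greedily choose an absorber inside $Z$ disjoint from those already chosen. This is possible by Step 1, because at most $\xi n\,h^2t\ll\gamma\zeta^{h^2t}n$ vertices of $Z$ are used.

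We then pad this choice with further absorbers $A_S\subseteq Z$ for auxiliary $h$-sets $S$. These are used only through their own $H$-factors, without $S$. The padding makes the set $Z'$ of used $Z$-vertices have size exactly $m$, possibly after moving a bounded number of vertices through one more absorber. Finally we take a perfect matching $M$ of $T-Z'$, and tile as follows:
\begin{itemize}
\item $G[A_{ij}\cup Y_i\cup\{z_j\}]$ for $ij\in M$ (here $z_j$ denotes $z_j$ if $j\in J$, and the corresponding vertex of $Z\setminus Z'$ otherwise);
\item $G[A_{ij}]$ for $ij\in E(T)\setminus M$;
\item $G[A_S\cup S]$ for the $h$-sets $S\subseteq U$, and $G[A_S]$ for the padding absorbers.
\end{itemize}
This gives an $H$-factor of $G[A\cup U]$.

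We expect the main obstacle to be the bookkeeping in Step 3 that forces $|Z'|$ to be exactly $m$ while respecting divisibility by $h$. Absorbers have varying sizes up to $h^2t$, so we would first pigeonhole to a single absorber size $s$ in Step 1. We would then pad with whole absorbers of size $s$, and absorb the remaining discrepancy (less than $s$ vertices) by slightly enlarging the flexible side of the template so that it tolerates removal of any number of vertices in $[m-s,m]$.
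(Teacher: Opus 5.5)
\textbf{Gap: Step~3 asks you to consume half of the reservoir, which the hypothesis cannot support.} Your architecture is the right one: a random reservoir, Montgomery's template, and absorbers placed on template edges. (The paper itself only quotes this lemma from Nenadov and Pehova, whose proof is of this type.) But Step~3 fails, and not for the bookkeeping reason you anticipate.

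In your template the flexible class is the whole reservoir $Z$, and exactly $m=|Z|/2\approx\zeta n/2$ of its vertices must be consumed. That is, $Z'\cup U$ has to be tiled by absorbers lying inside $Z$. Step~1 only guarantees about $\tfrac12\gamma\zeta^{h^2t}n$ disjoint absorbers per $h$-set inside $Z$. So your greedy choice is justified only while fewer than that many vertices of $Z$ have been used, which is far below $\zeta n/2$.

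This is not merely a weak estimate: half of a random $Z$ need not be $H$-tileable at all. Take $H=K_3$, $t=1$, and let $G$ be a clique $C$ on $n/10$ vertices completely joined to an independent set $R$ on the remaining vertices.
\begin{itemize}
\item Every $3$-set has about $n/60$ disjoint absorbers of size $3$ or $6$ inside $C$, so the hypothesis holds with $\gamma=1/70$.
\item Every triangle has at least two vertices in $C$. Hence a triangle tiling of $Z'\cup U$ covers at most $\tfrac32(|Z\cap C|+|U|)\approx\tfrac{3}{20}\zeta n+\tfrac32\xi n$ vertices.
\item This is less than $\zeta n/2\le|Z'\cup U|$.
\end{itemize}
So no pigeonholing of absorber sizes, and no slight enlargement of the flexible side, can repair this.

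\textbf{The missing idea: only a tiny fraction of the flexible set may be consumed.} Nenadov and Pehova use a variant of Montgomery's template. It is a bipartite graph $B$ of maximum degree $40$ with classes $X_m\cup Y_m$ and $Z_m$, where $|X_m|=(1+\beta)m$, $|Y_m|=2m$ and $|Z_m|=3m$. For every $X'\subseteq X_m$ with $|X'|=m$, the graph $B[X'\cup Y_m,Z_m]$ has a perfect matching, and $\beta>0$ can be taken arbitrarily small.

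Let the random reservoir play the role of $X_m$. Place the vertices for $Y_m$, the $(h-1)$-sets for $Z_m$, and the edge absorbers outside the reservoir. Then only $\beta m$ reservoir vertices need to be consumed. Choosing $\beta\ll\gamma\zeta^{h^2t}$ and $\xi\ll\beta\zeta/(h^2t)$ keeps all of Step~3 in the regime where your greedy argument is valid.

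The padding is also simpler than you feared. Every absorber has an $H$-factor, so you can pad one copy of $H$ at a time. The required congruence modulo $h$ is automatic from $h\mid|A\cup U|$, since the part tiled via the perfect matching has size divisible by $h$. If $h\nmid|U|$, first add fewer than $h$ reservoir vertices to $U$. Note that your claim $h\mid|U|$ holds only if you arrange $h\mid m$.
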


In what follows, we use only the oriented versions of the closed-partition, transferral-merging, absorber-production, and absorbing-set statements.  These are available in the oriented-graph framework of~\cite{ChenLuWangZhang2026}; the undirected statements above are recorded only to indicate the underlying absorption framework.

\begin{lemma}[Closed partition lemma, see {\cite[Lemma 6.4]{ChenLuWangZhang2026}}]\label{lem:4.8} 
Let $\alpha,\gamma>0$ and let $\vec H$ be a fixed digraph on
$h\geq3$ vertices.  There exist $\beta=\beta(\alpha,\gamma,h)>0$ and $T\in\N$ such that the
following holds for all sufficiently large $n$. Suppose that $D$ is an $n$-vertex oriented graph and every vertex is
$(\vec H,\alpha n,1)$-reachable to at least $\gamma n$ other
vertices.  Then $V(D)$ has a partition
$\cP=\{V_1,\ldots,V_p\}$
such that $p\leq\left\lceil\frac1\gamma\right\rceil,$ $|V_i|\geq\frac\gamma2n,$
and every $V_i$ is $(\vec H,\beta n,T)$-closed, $i\in [p]$.
\end{lemma}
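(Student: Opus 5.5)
The plan is to follow the greedy-merging argument behind \cite[Lemma 4.1]{HMWY2024}, transcribed to oriented graphs. Reachability here depends only on which vertex sets induce $\vec H$-factors, and that is an abstract property. So the argument never uses undirected structure, and orientation plays no role.

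The first step is a transitivity lemma. Suppose $u$ is $(\vec H,\beta n,t_1)$-reachable to $w$, and $w$ is $(\vec H,\beta n,t_2)$-reachable to $v$. Then $u$ and $v$ are $(\vec H,\beta n-ht_1,t_1+t_2)$-reachable. Indeed, given a forbidden set $W$, first pick a connector $Q_1$ for $(u,w)$ avoiding $W\cup\{v\}$. Then pick a connector $Q_2$ for $(w,v)$ avoiding $W\cup Q_1\cup\{u\}$. The set $Q_1\cup Q_2\cup\{w\}$ is a connector for $u$ and $v$, because $Q_1\cup\{u\}$ and $Q_2\cup\{w\}$ both have $\vec H$-factors, and so do $Q_1\cup\{w\}$ and $Q_2\cup\{v\}$. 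The size is at most $h(t_1+t_2)-1$.

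The second step is a reachable-neighbourhood lemma. For $v\in V(D)$, let $\tilde N(v)$ be the set of vertices $1$-reachable to $v$, so $|\tilde N(v)|\ge\gamma n$. Consider any vertices $v_1,\dots,v_{c+1}$ with $c=\lceil 1/\gamma\rceil$. By pigeonhole, some two sets $\tilde N(v_i),\tilde N(v_j)$ intersect in at least $\gamma^2 n/2$ vertices, roughly. Any common vertex $w$ in this intersection makes $v_i$ and $v_j$ reachable with $t=2$ by the first step. A counting argument then gives the following. Say $u,v$ are $(\beta',i)$-reachable if $u$ and $v$ have many common "intermediate" chains of length $i$. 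Then among any $c+1$ vertices, two are reachable with $t\le 2^{c}$ and parameter $\beta_c$, where the parameters $\beta_1\gg\beta_2\gg\dots$ degrade geometrically.

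The third step builds the partition. Following Lo--Markstr\"om and \cite{HMWY2024}, pick a maximal set $\{v_1,\dots,v_p\}$ of vertices that are pairwise \emph{not} $(\vec H,\beta_k n,2^{k})$-reachable, at a suitably chosen level $k\le c$. By the second step, $p\le c$. Define $V_i$ as the vertices reachable to $v_i$ at the next level. Then:
\begin{itemize}
\item every vertex lies in some $V_i$, by maximality;
\item ties are broken arbitrarily to make the sets disjoint;
\item each $V_i$ is closed at a slightly weaker level, by transitivity through $v_i$;
\item $|V_i|\ge|\tilde N(v_i)|\ge\gamma n/2$.
\end{itemize}

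The main obstacle is choosing the level $k$ so that the definitions stay consistent. The sets $V_i$ must be defined at a level at which distinct representatives remain non-reachable, yet still coarse enough to cover everything. The standard resolution iterates $k$ from $c$ downward and stops at the first stable level. This is possible because there are at most $c$ levels and each failure strictly enlarges an independent family. The outputs are $\beta=\beta_{c+1}/2$ and $T=2^{c+1}$.
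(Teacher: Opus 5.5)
The paper gives no proof of this lemma; it imports it from \cite[Lemma 6.4]{ChenLuWangZhang2026}. So the question is whether your reconstruction stands on its own. Your overall plan is the right one: nested reachability levels, a pigeonhole bound on pairwise non-reachable families, and a stable level. However, the step that actually produces closed parts does not work.

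\textbf{The gap.} Your first step is false as stated. The connector you build, $Q_1\cup Q_2\cup\{w\}$, contains $w$, and the forbidden set $W$ is allowed to contain $w$. In that case there may be no connector at all. For instance, if $\vec H$ is connected and $w$ separates $u$ from $v$ in the underlying graph, then counting $|Q\cap C|$ modulo $h$ over the components $C$ of $D-w$ rules out every connector avoiding $w$. Robust reachability is transitive only through \emph{linearly many} intermediates: if $|\tilde N(u)\cap\tilde N(v)|$ exceeds the permitted size of $W$, you can choose $w\notin W$. Your second step survives this correction, because there the intersection is linear.

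Your third step does not survive it. The claim that each $V_i$ is closed ``by transitivity through $v_i$'' routes every pair of $V_i$ through the single vertex $v_i$, which fails as soon as $v_i\in W$. So closedness of the parts, which is the entire content of the lemma, is not established. Two further problems: an inclusion-maximal family is not enough, and breaking ties arbitrarily gives no control on $|V_i|$. Nothing you wrote bounds how much of $\tilde N(v_i)$ is handed to other parts.

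\textbf{The missing idea.} Closedness should come from the \emph{maximum size} of the representative family, not from the representative itself.
\begin{enumerate}
\item Let $p_L$ be the largest size of a family that is pairwise non-reachable at level $L$. Then $p_L$ is non-increasing in $L$ and satisfies $p_L\le c=\lceil 1/\gamma\rceil$ from level $2$ on. Hence some $L\le c+1$ has $p_L=p_{L+1}=:p$.
\item Take $v_1,\dots,v_p$ pairwise non-reachable at level $L+1$. They are then also a maximum family at level $L$, so every vertex is level-$L$ reachable to some $v_j$.
\item Many-intermediates transitivity gives $|\tilde N_L(v_j)\cap\tilde N_L(v_{j'})|\le\beta_{L+1}n$ for $j\ne j'$. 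Otherwise $v_j$ and $v_{j'}$ would be reachable at level $L+1$.
\item Let $U_j$ be the set of vertices that are level-$L$ reachable to $v_j$ and to no other representative. For $x,y\in U_j$, the $(p+1)$-set $\{x,y\}\cup\{v_{j'}:j'\ne j\}$ must contain a level-$L$ reachable pair, and the only candidate is $\{x,y\}$. So $U_j$ is closed at level $L$ without any routing through $v_j$.
\item The vertices reachable to two or more representatives number at most $\binom p2\beta_{L+1}n$. Hence $|U_j|\ge\gamma n/2$.
\item Attach each of these overlap vertices to a part $U_j$ in which it has linearly many level-$L$ neighbours. Many-intermediates transitivity then makes every enlarged part closed at level $L+2$, and $p\le c$ gives the bound on the number of parts.
\end{enumerate}
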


\begin{lemma}[Transferral merging lemma; cf. {\cite[Lemma 6.2]{ChenLuWangZhang2026}}]
\label{transferral-merging}
For every fixed oriented graph $\vec H$ on $h\geq3$ vertices, every $t\in\N$ and every $\rho>0$, there exist constants
$\beta(\rho)>0, n_0>0$ and $\gamma(\rho,t)\in\N$ with
$\gamma(\rho,t)\geq t$ such that the following holds. Let $D$ be a digraph of order $n\geq n_0$ with a partition
$\cP=\{V_1,\ldots,V_p\}$ such that every $V_i$ is
$(\vec H,\rho n,t)$-closed. If, for distinct $i,j\in[p]$, there are two $(\vec H,\rho)$-robust
$h$-vectors $\mathbf s,\mathbf t$ satisfying 
$\mathbf s-\mathbf t=\bfe_i-\bfe_j$, then
$V_i\cup V_j$ is $(\vec H,\beta(\rho)n,\gamma(\rho,t))$-closed.
\end{lemma}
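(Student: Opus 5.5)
We prove the transferral merging lemma in the same way as its undirected counterpart, Lemma~\ref{lem:4.9}. The only work is to check that nothing in that argument uses the symmetry of edges: reachability and robustness are both defined through \emph{induced} copies of $\vec H$ and $\vec H$-factors, so the argument goes through verbatim for oriented graphs. Set $\beta(\rho):=\rho/2$ and $\gamma(\rho,t):=2ht$, mirroring Lemma~\ref{lem:4.9}; if we want the constants to match the more general framework of \cite{ChenLuWangZhang2026}, any larger value also works by monotonicity of reachability.

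First, pairs of vertices inside a single part are already fine. If $u,v\in V_i$ (or both in $V_j$), then they are $(\vec H,\rho n,t)$-reachable, hence $(\vec H,\rho n/2,2ht)$-reachable by monotonicity.

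The main case is $u\in V_i$ and $v\in V_j$. Fix $W$ with $|W|\le \rho n/2$. Write $\mathbf s=\mathbf t+\bfe_i-\bfe_j$, so $\mathbf s$ and $\mathbf t$ agree outside coordinates $i$ and $j$, and the coordinate $t_j\ge1$. We build the connector in four steps.

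\begin{enumerate}[label=(\roman*)]
\item Since $\mathbf t$ is $\rho$-robust and $|W\cup\{u,v\}|\le\rho n$, there is a copy $F_1$ of $\vec H$ in $D-(W\cup\{u,v\})$ with $\bfi_{\cP}(V(F_1))=\mathbf t$.
\item Since $\mathbf s$ is $\rho$-robust, there is a copy $F_2$ of $\vec H$ in $D-(W\cup\{u,v\}\cup V(F_1))$ with $\bfi_{\cP}(V(F_2))=\mathbf s$.
\item Match up the vertices of $F_1$ and $F_2$ part by part, so that every vertex of $F_1$ not in $V_j$, together with one vertex of $F_1$ in $V_j$, is paired with a vertex of $F_2$ in the same part. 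Exactly one vertex of $F_2$ is left unmatched, and it lies in $V_i$.
\item Use the closedness of each part to connect each matched pair, $u$ with the unmatched vertex $x\in V_i$ of $F_2$, and $v$ with the unmatched vertex $y\in V_j$ of $F_1$. This is $h+1$ applications of closedness. Each application forbids $W$, the vertices of $F_1$ and $F_2$, and all previously chosen connectors; this forbidden set has size at most $\rho n/2+2h+(h+1)(ht-1)\le\rho n$ for large $n$, so closedness of the parts (with parameter $\rho n$) supplies each connector.
\end{enumerate}

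Let $Q$ be the union of $V(F_1)$, $V(F_2)$ and all connectors. We claim that both $D[Q\cup\{u\}]$ and $D[Q\cup\{v\}]$ have $\vec H$-factors; the check is bookkeeping exactly as in \cite{HMWY2024}.

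\emph{Factor of $D[Q\cup\{u\}]$.} Take $F_1$, so every vertex of $F_1$ is used. Each vertex of $F_2$ other than $x$ is paired with a vertex of $F_1$; use the connector with that $F_1$-vertex, which covers it together with its connector. For $x$, use the connector between $u$ and $x$ with $u$ attached. For $y$, which is a vertex of $F_1$ and hence already covered, use the connector between $v$ and $y$ with $y$ attached; here $v\notin Q\cup\{u\}$, so $v$ is not used.

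\emph{Factor of $D[Q\cup\{v\}]$.} This is symmetric: take $F_2$, cover the connector of $u$ and $x$ using $x$, and cover the connector of $v$ and $y$ together with $v$.

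The \textbf{main obstacle} is the counting in the last step. Bounding $|Q|$ gives roughly $2h+(h+1)(ht-1)$, which can exceed $h\cdot 2ht-1$ for small $t$. If it does, enlarge $\gamma(\rho,t)$, for example to $\gamma(\rho,t):=4h^2t$, which the statement allows since it only asks for $\gamma(\rho,t)\ge t$. The construction itself is unchanged, and since it never uses symmetry of arcs, this gives the oriented version.
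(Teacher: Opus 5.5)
Your construction of $Q$ is the standard one: the copies $F_1,F_2$ with index vectors $\mathbf t,\mathbf s$, a part-preserving bijection between $V(F_1)\setminus\{y\}$ and $V(F_2)\setminus\{x\}$, and $h+1$ pairwise disjoint connectors $C_{ab}$, $C_{ux}$, $C_{vy}$. The paper gives no proof of this statement and imports it from Lemma~6.2 of Chen, Lu, Wang and Zhang, which rests on the same argument.

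However, your final verification does not produce $\vec H$-factors, because you have swapped the base copies. In your factor of $D[Q\cup\{u\}]$ you use $F_1$, the sets $C_{ab}\cup\{b\}$, then $C_{ux}\cup\{u\}$ and $C_{vy}\cup\{y\}$. This has two defects:
\begin{itemize}
\item the vertex $x\in V(F_2)\subseteq Q$ is never covered;
\item $y$ is covered twice, by $F_1$ and by $C_{vy}\cup\{y\}$.
\end{itemize}
The pieces you have cannot be rearranged to fix this with $F_1$ as the base. Once $F_1$ covers $y$ and $v\notin Q\cup\{u\}$, the connector $C_{vy}$ would have to be tiled by itself, which is impossible since $|C_{vy}|\equiv -1\pmod h$. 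Your factor of $D[Q\cup\{v\}]$, described as symmetric, has the mirror-image defect: $x$ is covered twice and $y$ never.

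The fix is that the base copy must be the one whose surplus vertex lies in the same part as the vertex being added.
\begin{itemize}
\item For $D[Q\cup\{u\}]$, take $F_2$, whose surplus vertex is $x\in V_i\ni u$. Add $D[C_{ux}\cup\{u\}]$, $D[C_{vy}\cup\{y\}]$, and $D[C_{ab}\cup\{a\}]$ for each matched pair $a\in V(F_1)$, $b\in V(F_2)$.
\item For $D[Q\cup\{v\}]$, take $F_1$. Add $D[C_{vy}\cup\{v\}]$, $D[C_{ux}\cup\{x\}]$, and $D[C_{ab}\cup\{b\}]$.
\end{itemize}
In each case these pieces are disjoint and partition $Q\cup\{u\}$, respectively $Q\cup\{v\}$. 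With this correction your proof is complete.

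Three smaller points:
\begin{itemize}
\item The forbidden set for each connector must also contain $u$ and $v$; the slack in $\rho n/2$ absorbs this.
\item In step (iii), all but one vertex of $F_1$ in $V_j$ is matched, not exactly one.
\item Your worry about $|Q|$ is unfounded. Since $(h-1)t\ge 1$, we have $2h+(h+1)(ht-1)=h^2t+ht+h-1\le 2h^2t-1$, so $\gamma(\rho,t)=2ht$ already suffices.
\end{itemize}
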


\begin{lemma}[Absorber‑production lemma; cf. {\cite[Lemma 5.3]{ChenLuWangZhang2026}}
]\label{closed}
Let $\rho>0$, $t\in\mathbb N$, and let $\vec H$ be an oriented graph on
$h\geq3$ vertices. Let $D$ be a sufficiently large $n$-vertex digraph with a partition $\mathcal P=\{V_1,\ldots,V_p\}$ such that every
$V_i$ is $(\vec H,\rho n,t)$-closed. If
$S\in\binom{V(D)}h$ and $\mathbf i_{\mathcal P}(S)$ is
$(\vec H,\rho)$-robust, then $S$ has at least $\frac{\rho}{h^3t}n$
pairwise vertex-disjoint $(\vec H, t)$-absorbers. 
\end{lemma}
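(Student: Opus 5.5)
We prove the absorber‑production lemma with the standard iterative greedy argument of \cite[Lemma~3.10]{HMWY2024}, adapted to a partition into closed parts. Write $S=\{s_1,\dots,s_h\}$ and $\mathbf v:=\bfi_{\cP}(S)$. We build the absorbers one at a time, maintaining a family $\mathcal A$ of pairwise vertex‑disjoint $(\vec H,t)$‑absorbers for $S$. We stop once $|\mathcal A|\ge \frac{\rho}{h^3t}n$. Suppose $|\mathcal A|<\frac{\rho}{h^3t}n$, and let $W:=S\cup V(\mathcal A)$. Since every absorber has at most $h^2t$ vertices, $|W|\le h+h^2t\cdot\frac{\rho n}{h^3t}=h+\frac{\rho n}{h}$, which is well below $\rho n$ for $n$ large. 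It therefore suffices to construct one new absorber avoiding a forbidden set $W$ of size at most $\rho n/2$.

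\emph{Construction of one absorber.} Since $\mathbf v$ is $(\vec H,\rho)$‑robust and $|W|\le\rho n$, the digraph $D-W$ contains a copy $F$ of $\vec H$ with $\bfi_{\cP}(V(F))=\mathbf v$. Hence we may label $V(F)=\{y_1,\dots,y_h\}$ so that $s_i$ and $y_i$ lie in the same part $V_{j(i)}$ for every $i\in[h]$. Now process $i=1,\dots,h$ in turn. Because $V_{j(i)}$ is $(\vec H,\rho n,t)$‑closed, the vertices $s_i$ and $y_i$ are $(\vec H,\rho n,t)$‑reachable. Apply this reachability with forbidden set
\[
W_i:=\bigl(W\cup V(F)\cup Q_1\cup\dots\cup Q_{i-1}\bigr)\setminus\{s_i,y_i\}.
\]
This gives a connector $Q_i$ with $|Q_i|\le ht-1$, disjoint from $W_i\cup\{s_i,y_i\}$, such that both $D[Q_i\cup\{s_i\}]$ and $D[Q_i\cup\{y_i\}]$ have $\vec H$‑factors. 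The forbidden set stays small, since $|W_i|\le |W|+h+h(ht-1)\le \rho n$. Set $A_S:=V(F)\cup Q_1\cup\dots\cup Q_h$.

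\emph{Verification.} First, $A_S\cap S=\emptyset$ and $A_S\cap V(\mathcal A)=\emptyset$ by construction, and $|A_S|\le h+h(ht-1)=h^2t$. Second, $D[A_S]$ has an $\vec H$‑factor: take $F$ together with the factors of $D[Q_i\cup\{y_i\}]$ for each $i$. Third, $D[A_S\cup S]$ has an $\vec H$‑factor: take the factors of $D[Q_i\cup\{s_i\}]$ for each $i$ together with $F$ itself. These pieces cover $A_S\cup S$ exactly once. So $A_S$ is a $(\vec H,t)$‑absorber for $S$ disjoint from all earlier ones; we add it to $\mathcal A$ and repeat.

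The only delicate point is the bookkeeping of the forbidden sets. The robustness threshold $\rho n$ and the reachability threshold $\rho n$ must both tolerate the union of $S$, all previously built absorbers, $F$, and the earlier connectors. This is exactly why the target count is $\rho n/(h^3t)$ rather than something larger: it keeps $|W|\le \rho n/h+h$, leaving ample room. The rest is routine, and "sufficiently large $n$" absorbs the additive constants.
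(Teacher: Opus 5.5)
Your strategy is the standard greedy construction. The paper proves nothing here and simply imports the statement from \cite{ChenLuWangZhang2026}, so there is no in-paper argument to compare against. Your bookkeeping is sound:
\begin{itemize}
\item While $|\mathcal A|<\frac{\rho}{h^3t}n$, the forbidden sets satisfy $|W_i|\le \rho n/h+h^2t+h\le\rho n$ for large $n$.
\item The copy $F$ avoids $W\supseteq S$, so $y_i\neq s_i$ and reachability applies to each pair.
\item Each $Q_j$ avoids $S\cup V(F)\cup V(\mathcal A)$, and removing $\{s_i,y_i\}$ from $W_i$ discards none of these vertices. Hence the $Q_i$ are pairwise disjoint, disjoint from $S\cup V(F)\cup V(\mathcal A)$, and $|A_S|\le h^2t$.
\end{itemize}

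There is one genuine error, in your verification that $D[A_S]$ has an $\vec H$-factor. You take $F$ together with the factors of $D[Q_i\cup\{y_i\}]$, but each $y_i$ lies in both $V(F)$ and $Q_i\cup\{y_i\}$. So every vertex of $F$ is covered twice; counted with multiplicity the family has $|A_S|+h$ vertices, and it is not a factor.

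The fix is to use the factors of $D[Q_i\cup\{y_i\}]$, $i\in[h]$, alone. Because the $Q_i$ are pairwise disjoint and disjoint from $V(F)=\{y_1,\dots,y_h\}$, the sets $Q_i\cup\{y_i\}$ partition $A_S$ exactly. The copy $F$ belongs only in the factor of $D[A_S\cup S]$, where, as you correctly say, $F$ and the factors of $D[Q_i\cup\{s_i\}]$ partition $A_S\cup S$. The two factors differ precisely in whether the $y_i$ are covered via the connectors or via $F$; that switch is what makes $A_S$ an absorber. With this correction your argument proves the lemma.
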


\begin{lemma}[Absorbing-set lemma; see {\cite[Lemma 5.2]{ChenLuWangZhang2026}}]\label{absorbing} Let $L,h\in\mathbb{N}$ with $h\ge 3$. For every $\eta>0$, there exist
$\xi=\xi(h,L,\eta)>0$ and $n_0$ such that the following holds. Suppose that $H$ and $D$ are two digraphs of orders $h$ and $n\geq n_0$, respectively. If every set $S\in \binom{V(D)}{h}$ has at least $\eta n$ disjoint $(\vec {H},L)$‑absorbers, then $D$ contains an $(\vec {H},\xi)$‑absorbing set of size at most $\eta n$.
\end{lemma}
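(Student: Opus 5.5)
The plan is to follow the template-based construction of Nenadov and Pehova, which builds on Montgomery's robust bipartite template. The reason is that the hypothesis only gives a \emph{linear} number $\eta n$ of vertex-disjoint $(\vec H,L)$-absorbers for each $h$-set. That is too weak for the classical Rödl--Ruciński--Szemerédi random-sampling argument, which needs $\Omega(n^{h^2L})$ absorbers per set. Throughout, every absorber has at most $h^2L$ vertices. We fix constants $1/n_0\ll\xi\ll\xi'\ll\eta,1/h,1/L$.

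First, we reserve a flexible set. Choose a random set $Z\subseteq V(D)$ of size about $\xi' n$, where $|Z|$ is adjusted so that $h\mid |Z|$ and $Z$ splits into two halves $Z_1\mathbin{\dot\cup} Z_2$ of prescribed sizes. We also take Montgomery's bipartite template $T$ with parts $X$ and $Y\cup Y'$, maximum degree at most $40$, and the following robustness property: for every $Y''\subseteq Y'$ of size $|Y'|/2$, $T[X, Y\cup Y'']$ has a perfect matching. In our construction, each vertex of $X$ will stand for an $(h-1)$-subset of $Z_1$, and each vertex of $Y\cup Y'$ will stand for a single vertex of $Z_2$. Every edge $xy$ of $T$ then corresponds to an $h$-set $S_{xy}$.

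Next, we embed absorbers for the template edges. For each edge $xy$ of $T$, we greedily choose an $(\vec H,L)$-absorber $A_{xy}$ for $S_{xy}$. It must be disjoint from $Z$ and from all absorbers chosen so far. This is possible because $T$ has $O(\xi' n)$ edges, so the forbidden vertices number at most $O(h^2L\xi' n)+|Z|<\eta n/2$. Since $S_{xy}$ has $\eta n$ disjoint absorbers, at least $\eta n/2$ of them avoid the forbidden set. Let $A_0=Z\cup\bigcup_{xy}A_{xy}$, so that $|A_0|\le \eta n/2$. Given any matching $M$ of $T$ that saturates $X$ and leaves exactly the vertices of $Y''$ (the unmatched half of $Y'$) uncovered, $D[A_0\setminus Y'']$ has an $\vec H$-factor. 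Indeed, for $xy\in M$ we use the factor of $D[A_{xy}\cup S_{xy}]$, and for $xy\notin M$ we use the factor of $D[A_{xy}]$.

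The key remaining step, and the one I expect to be the main obstacle, is to make the leftover $U$ interact only with the flexible half $Y'$. My first attempt would be the following. Let $A:=A_0\cup B$, where $B$ is an additional set of greedily chosen absorbers. Given $U$, we group the vertices of $U$ together with vertices of $Y'$ into $h$-sets. Each such $h$-set is covered via a fresh absorber reserved in advance for that \emph{type} of set. The difficulty is that absorbers exist for $h$-sets, not for single vertices, and $U$ is arbitrary. I would therefore use the hypothesis applied to $h$-sets consisting of one vertex $u$ together with $h-1$ vertices of $Y'$. A counting or Hall-type argument over the random choice of $Z$ should show that every vertex $u$ has, w.h.p., many absorbers compatible with many choices of $(h-1)$-subsets of $Y'$. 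Using $|U|\le\xi n\ll\xi' n$, we can then greedily select disjoint absorbers. These absorb $U$ together with a set $Y''\subseteq Y'$ of exactly the right size, which is $|Y'|/2$ after the divisibility adjustment $|A\cup U|\in h\N$. The robustness of $T$ then finishes the factor on the rest of $A$.

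Verifying that the random choice of $Z$ makes these compatible choices available for every possible $U$, while keeping $|A|\le\eta n$, is the delicate part. If this fails, I would fall back on the exact scheme of \cite{NenadovPehova2020}, which handles this step with a second layer of template.
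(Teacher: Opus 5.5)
Your first two steps are sound and follow the Nenadov--Pehova template argument behind the cited lemma: the Montgomery-type template on a reserved set, the greedy choice of disjoint absorbers for the template edges, and the fact that $D[A_0\setminus Y'']$ has an $\vec H$-factor for every admissible released set. The gap is the step that actually makes $A$ absorbing. You leave it as a hope, and the mechanism you sketch does not work as written.

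First, absorbers cannot be ``reserved in advance for each type'' of $h$-set $\{u\}\cup T$. Since $U$ is arbitrary, any of the $n$ vertices may lie in it. You cannot set aside disjoint absorbers for all $n$ candidates inside a set of size $\eta n$, so whatever is reserved must serve every vertex simultaneously. Moreover, any absorber used for $u$ must lie inside $A$. Absorbers that are merely ``compatible with'' $(h-1)$-subsets of $Y'$ are useless unless their vertices themselves belong to the flexible class.

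Second, there is a counting mismatch. The template forces you to release exactly $|Y'|/2=\Theta(\xi' n)$ vertices of $Y'$. Grouping each $u\in U$ with $h-1$ vertices of $Y'$ consumes at most $(h-1)|U|\le h\xi n$ of them, and none if $U=\emptyset$. So a linear number of further vertices of $Y'$ must be covered by copies of $\vec H$ inside $Y'$, and nothing in your construction supplies them.

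The missing idea is a reservoir property of a random flexible class $F$. Take $F$ random of density $p$ with $p\ll\eta/(h^2L)$. For fixed $S\in\binom{V(D)}{h}$, the $\eta n$ given absorbers are pairwise disjoint. Hence the events that they lie in $F$ are independent, each with probability at least $p^{h^2L}$. Chernoff and a union bound over all $h$-sets then give that, w.h.p., every $h$-set has at least $\eta p^{h^2L}n/2$ of its disjoint absorbers inside $F$. Trimming $F$ to the exact size the template needs destroys only $o(n)$ of them.

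Given $U$, proceed as follows:
\begin{enumerate}
\item For each $u\in U$, take $h-1$ unused vertices $T_u\subseteq F$ and an unused absorber $A_u\subseteq F$ for $\{u\}\cup T_u$. Cover $\{u\}\cup T_u\cup A_u$ by the factor of $D[A_u\cup T_u\cup\{u\}]$.
\item Consume further vertices of $F$ by single copies of $\vec H$ taken from the factors of $D[A']$ for unused absorbers $A'\subseteq F$. Stop when exactly the prescribed number of vertices of $F$ has been used; the residue works out because $|A\cup U|\in h\N$.
\item The template matching then covers the rest of $A$.
\end{enumerate}

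This greedy padding succeeds only if the number of released vertices is small compared with $\eta p^{h^2L}n$. That is why the argument uses the variant of the template in which only a $\beta$-fraction of the flexible class is released. There $|F|=(1+\beta)m$, and every $m$-subset of $F$ together with the rigid class has a perfect matching into the $(h-1)$-set class, with $\xi\ll\beta\ll\eta p^{h^2L-1}$. With your ``release exactly half of $Y'$'' template, the padding breaks down: a random $Y'$ contains far fewer absorbers per $h$-set than $|Y'|/2$. No second layer of template is needed.
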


Every orientation $\Cvec$ of a cycle which is not directed has a source and a sink.  Thus, after choosing a source $s$ and a sink $t$, the two digraphs $\Cvec-s$ and $\Cvec-t$ are oriented paths on $\ell-1$ vertices.  These are the two orientations to be embedded within the common in‑ or out‑neighbourhoods below. This is the primary motivation for our assumption that \(\Cvec\) is not directed.

\begin{lemma}[Embedding Lemma]\label{lem:tree}\cite{AddarioBerry2013}
Every $(k^2/2 - k/2 + 1)$-chromatic digraph contains every oriented tree of order $k$.
\end{lemma}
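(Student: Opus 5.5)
The plan is induction on $k$, with the target bound written as $1+\sum_{i=1}^{k-1} i$. The shape of that sum suggests a layered colouring argument in which the $i$-th layer costs $i$ colours. We prove the contrapositive: if a digraph $D$ contains no copy of a given oriented tree $T$ of order $k$, then $\chi(D)\le k(k-1)/2$. The base case $k=1$ is trivial, since a single vertex embeds in any non-empty digraph. For $k\ge 2$, choose a leaf $v$ of $T$ with neighbour $u$, and assume by symmetry (reverse all arcs otherwise) that the arc is $uv$. Put $T'=T-v$, which is an oriented tree of order $k-1$.

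Next we split $V(D)$ according to how well copies of $T'$ can be extended. Fix an acyclic ordering tool: order $V(D)$ so as to maximise the number of forward arcs (a median order), and work with the acyclic ``forward'' part. Let $A$ be the set of vertices $x$ with out-degree at most $k-2$ into the part of $D$ where the remaining copies of $T'$ live. Because every vertex of $D[A]$ has bounded out-degree in the relevant subdigraph, a greedy colouring along the median order colours $D[A]$ with few colours. We must arrange the degeneracy so that this costs only $k-1$ colours, which is the last summand. The key point is that every vertex $y\notin A$ has at least $k-1$ out-neighbours available. So if some copy of $T'$ in $D-A$ places $u$ at a vertex $y\notin A$, then $y$ has an out-neighbour outside the image of $T'$ (which has only $k-1$ vertices, one of them $y$ itself), and $T$ embeds. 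Hence $D-A$ contains no copy of $T'$ in which $u$ is mapped to a ``rich'' vertex. We would like to deduce that $D-A$ contains no copy of $T'$ at all.

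The main obstacle is this last step. Forbidding copies of $T'$ only with $u$ at specific vertices does not immediately forbid $T'$ in $D-A$. We will try to fix it by defining $A$ recursively as the closure of low-out-degree vertices, peeling repeatedly until what remains has minimum out-degree at least $k-1$ relative to itself. We then argue that a copy of $T'$ inside the remainder automatically has $u$ at a vertex of out-degree at least $k-1$ in the remainder, which gives the contradiction. Meanwhile the peeled set $A$ is $(k-2)$-out-degenerate in the peeling order. To colour the peeled set with $k-1$ colours rather than the naive $2(k-2)+1$, we would use that the peeling order can be taken compatibly with the median order, so that only one side of each vertex's neighbourhood needs to be avoided greedily.

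Granting this, induction gives $\chi(D-A)\le (k-1)(k-2)/2$. Adding the $k-1$ colours used on $A$ yields $\chi(D)\le (k-1)(k-2)/2+(k-1)=k(k-1)/2$, as required. If the colour count on $A$ cannot be brought down to $k-1$, the fallback is Burr's cruder argument, which gives $(k-1)^2$. That bound would still suffice for the applications in this paper, since only some fixed bound depending on $\ell$ is needed there.
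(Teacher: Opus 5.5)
There is a genuine gap, and it sits at the step that is supposed to beat Burr's bound: your claim that the peeled set $A$ can be coloured with $k-1$ colours. (The paper gives no proof of this lemma and simply imports it from Addario-Berry et al., so your argument has to stand on its own.) The claim is false, not merely unproved. Take $T$ to be the out-star with centre $u$ and $k-1$ leaves, and let $D$ be a regular tournament on $2k-3$ vertices, which is the directed triangle when $k=3$. Every out-degree is $k-2$, so $D$ is $T$-free. Your peeling removes every vertex in the first round, so $A=V(D)$ and $\chi(D[A])=2k-3>k-1$ for $k\ge 3$.

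No choice of ordering repairs this. A median order only guarantees that each vertex has at most as many later in-neighbours as later out-neighbours. That gives at most $2(k-2)$ later neighbours, hence $2k-3$ colours, and the example shows this is sharp. The property you want, that only one side of the neighbourhood must be avoided, holds only for acyclic digraphs, where you colour in reverse topological order. In the example the target $k(k-1)/2$ survives only because $D-A$ is empty. So a proof along these lines must trade colours between $A$ and $D-A$ rather than bound $\chi(D[A])$ and $\chi(D-A)$ separately, and your proposal has no mechanism for that.

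The obstacle you singled out is not actually an obstacle. Use the one-step set $A_0=\{x: d^+_D(x)\le k-2\}$. Any copy of $T'$ in $D-A_0$ puts $u$ at a vertex with $k-1$ out-neighbours in $D$, and the vertex used to extend it may lie in $A_0$. So $D-A_0$ is already $T'$-free, with no peeling needed.

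Moreover, $D[A_0]$ has maximum out-degree at most $k-2$. Its underlying graph is therefore $(2k-4)$-degenerate, so $\chi(D[A_0])\le 2k-3$. This gives $f(k)\le f(k-1)+2k-3$, that is, the threshold $(k-1)^2+1$.

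Recursive peeling discards even this. An out-degenerate set has no chromatic bound on its own; a transitive tournament peels away completely, one sink at a time.

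Retreating to a Burr-type bound does not prove the lemma as stated. You are right, though, that it is enough for this paper. The lemma is only applied to oriented paths of order $k=\ell-1$ inside sets with $\chi>(\ell-2)^2+1=(k-1)^2+1$, and the one-step argument above already covers that.
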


\subsection{Auxiliary lemmas developed for the proof}
In this section, we establish the required absorption lemma and almost‑covering lemma. To prove the absorption lemma, we need the following lemma, which states that every positive linear semidegree guarantees that each vertex is
$(\Cvec,\beta_1 n,1)$-reachable to linearly many other vertices.
\begin{lemma}[Reachability lemma]\label{lem:4.6}
{Let $\Cvec$ be an orientation of $C_\ell$, $\ell\ge3$, which is not directed. For every $c\in(0,1/2]$ there exist positive constants $\beta_1,\gamma_1,\gamma , n_0$ such that every $n$-vertex oriented graph $D$ of order $n\geq n_0$ with $\delta^0(D)\ge cn$ and $\alpha(D)\leq \gamma n$ has the property that every vertex in $D$ is $(\Cvec,\beta_1n,1)$-reachable to at least $\gamma_1n$ other vertices.}
\end{lemma}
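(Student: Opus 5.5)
The plan is to build the connector for a pair $u,v$ directly from a copy of $\Cvec$ minus a source, placed inside the common out-neighbourhood of $u$ and $v$. Fix a source $s$ of $\Cvec$. By the remark preceding Lemma~\ref{lem:tree}, $P:=\Cvec-s$ is an oriented path, hence an oriented tree, on $\ell-1$ vertices. In $\Cvec$ the vertex $s$ sends arcs to exactly two vertices of $P$ and receives none.

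The key observation is the following. Suppose $Q\subseteq N^+(u)\cap N^+(v)$ spans a copy of $P$ in $D$. Then mapping $s$ to $u$ gives a copy of $\Cvec$ on $Q\cup\{u\}$, because every arc leaving $s$ lands in $Q\subseteq N^+(u)$. The same holds with $u$ replaced by $v$. So $Q$ is a $\Cvec$-connector with $|Q|=\ell-1\le \ell\cdot 1-1$, which is exactly what $(\Cvec,\beta_1n,1)$-reachability requires.

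\textbf{Step 1: counting.} Fix $u$. Every $w\in N^+(u)$ has $d^-(w)\ge cn$, so
\[
\sum_{v\in V(D)}|N^+(v)\cap N^+(u)|=\sum_{w\in N^+(u)}d^-(w)\ge c^2n^2 .
\]
Each summand is at most $n$. Hence at least $c^2n/2$ vertices $v$ satisfy $|N^+(v)\cap N^+(u)|\ge c^2n/2$. Discarding $v=u$ leaves at least $\gamma_1 n$ such $v\neq u$, where $\gamma_1:=c^2/4$ and $n$ is large.

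\textbf{Step 2: choosing constants.} Set $\beta_1:=c^2/8$ and $k:=\ell-1$. Choose $\gamma>0$ so small that $c^2/(16\gamma)\ge k^2/2-k/2+1$, say $\gamma:=c^2/(16\ell^2)$.

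\textbf{Step 3: embedding the path.} Take $v$ from Step 1 and any $W\subseteq V(D)\setminus\{u,v\}$ with $|W|\le\beta_1 n$. Let
\[
X:=\bigl(N^+(u)\cap N^+(v)\bigr)\setminus(W\cup\{u,v\}).
\]
Then $|X|\ge c^2n/2-c^2n/8-2\ge c^2n/4$. Since $\ind(D[X])\le\ind(D)\le\gamma n$, every colour class of a proper colouring of $D[X]$ has size at most $\gamma n$. Therefore
\[
\chi(D[X])\ge \frac{|X|}{\gamma n}\ge \frac{c^2}{4\gamma}\ge k^2/2-k/2+1 .
\]
Lemma~\ref{lem:tree} now gives a copy of $P$ in $D[X]$, and its vertex set $Q$ is the required connector, by the key observation above. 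This shows every vertex $u$ is $(\Cvec,\beta_1n,1)$-reachable to at least $\gamma_1 n$ other vertices.

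The only delicate point is the chromatic-number step: colour classes are independent sets of $D$, and this is exactly where the hypothesis $\ind(D)\le\gamma n$ and the choice of $\gamma$ relative to $c$ and $\ell$ enter. Everything else is the elementary double count in Step 1. Note that the argument never uses $\semid(D)>n/4$, only $\semid(D)\ge cn$.
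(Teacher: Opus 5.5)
Your proposal is correct and matches the paper's proof: both use the double count $\sum_{x\in N^+(u)}d^-(x)\ge c^2n^2$ to find at least $c^2n/2$ partners with common out-neighbourhood of size at least $c^2n/2$, and then embed the path $\Cvec-s$ into that common out-neighbourhood minus $W$, using $\chi\ge |S|/\alpha(D)$ and Lemma~\ref{lem:tree}. The only differences are cosmetic: you set $\gamma$ against the exact bound $k^2/2-k/2+1$ with $k=\ell-1$, whereas the paper uses the slightly larger threshold $(\ell-2)^2+1$.
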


\begin{proof}
Choose a source $s$ of $\Cvec$ and let $\vec P:=\Cvec-s$. Put $\beta_1:=\frac{c^2}{8}$, $ \gamma:=\frac{c^2}{8((\ell-2)^2+1)}$ and $\gamma_1:=\frac{c^2}{4}$. {For each $v\in V(D)$, double counting gives}
\begin{equation}
\sum_{u\in V(D)}|N_D^+(v)\cap N_D^+(u)|
=\sum_{x\in N_D^+(v)}d_D^-(x)\ge c^2n^2.
\end{equation}
Let $R_v:=\{u\in V(D):|N_D^+(v)\cap N_D^+(u)|\ge c^2n/2\}$. Then $|R_v|\ge c^2n/2$. Indeed,
\[
c^2n^2\le |R_v|n+(n-|R_v|)\frac{c^2n}{2}.
\]
For $u\in R_v\setminus\{v\}$ and $W\subseteq V(D)\setminus\{u,v\}$ with $|W|\le\beta_1n$, set $S:=(N_D^+(u)\cap N_D^+(v))\setminus W$. Then $|S|\ge \frac{3c^2}{8}n.$ Since $\ind(D[S])\le\ind(D)\le\gamma n$, we have
\[
 \chi(UG(D[S]))\ge\frac{|S|}{\ind(D[S])}
\ge 3((\ell-2)^2+1)>(\ell-2)^2+1.
\]
By Lemma~\ref{lem:tree}, $D[S]$ contains $\vec P$. Since every vertex of $S$ is an out-neighbour of both $u$ and $v$, adding $u$ or $v$ to this copy of $\vec P$ gives a copy of $\Cvec$. Thus $v$ is $(\Cvec,\beta_1n,1)$-reachable to every $u\in R_v\setminus\{v\}$, and hence, for sufficiently large $n$, to at least $\gamma_1n$ other vertices.
\end{proof}

We also need the following Robust Transferral Lemma to prove the absorption lemma.

\begin{lemma}[Robust Transferral Lemma]\label{lem:profiles}
For any $d>0$ and $M\in\N$, and every
$0<\varepsilon\leq\min\{d/8,1/8\}$, there are constants
$\beta,\gamma>0$ and $n_0\in\N$ such that the following holds.  Let $D$ be an oriented graph of order $n\ge n_0$ with a partition $\mathcal P=(V_1,\ldots,V_k)$.  Suppose that, for some distinct $i,j\in[k]$, there are sets $X\subseteq V_i$ and $Y\subseteq V_j$ with $|X|=|Y|=m\ge n/M$ such that $(X,Y)$ is an $(\varepsilon,d)$-regular directed pair.  If $\alpha(D)\le\gamma n$, then there exist two $(\Cvec,\beta)$-robust $\ell$-vectors
\[
\mathbf p_{ij}:=\mathbf e_i+(\ell-1)\mathbf e_j,\qquad
\mathbf q_{ij}:=(\ell-1)\mathbf e_i+\mathbf e_j,
\]
with respect to $\mathcal P$.
\end{lemma}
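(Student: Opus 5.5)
We need copies of $\Cvec$ avoiding any small set $W$, with one vertex in $X\subseteq V_i$ and $\ell-1$ vertices in $Y\subseteq V_j$ (for $\mathbf p_{ij}$), and symmetrically for $\mathbf q_{ij}$. The plan is to pick a single vertex $x$ of $X\setminus W$ with many out-neighbours (or in-neighbours) in $Y\setminus W$, and then embed the oriented path $\Cvec-s$ (or $\Cvec-t$) inside that neighbourhood. The embedding uses Lemma~\ref{lem:tree}, as in the proof of Lemma~\ref{lem:4.6}, with the small independence number forcing large chromatic number.

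We choose $\beta:=\min\{\varepsilon,1\}/(4M)$, so that $|W|\le\beta n\le\varepsilon m/4$, and $\gamma:=d/(8M((\ell-2)^2+1))$. Fix $W$ with $|W|\le\beta n$ and set $X':=X\setminus W$, $Y':=Y\setminus W$. Both have size at least $(1-\varepsilon/4)m\ge\varepsilon m$. By $\varepsilon$-regularity of the directed pair $(X,Y)$ with density at least $d$, we have $d(X',Y')>d-\varepsilon\ge 7d/8$. Hence some $x\in X'$ has $|N^+(x)\cap Y'|\ge 7dm/8$.

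For $\mathbf p_{ij}$, let $s$ be a source of $\Cvec$ and $\vec P:=\Cvec-s$. Put $S:=N^+(x)\cap Y'$. Since $\alpha(D[S])\le\gamma n\le\gamma Mm$, we get
\[
\chi(UG(D[S]))\ge\frac{|S|}{\gamma Mm}\ge\frac{7d}{8\gamma M}>(\ell-2)^2+1,
\]
which is at least $((\ell-1)^2-(\ell-1))/2+1$. So Lemma~\ref{lem:tree} gives a copy of the $(\ell-1)$-vertex oriented tree $\vec P$ inside $D[S]$. Mapping $s$ to $x$ closes it into a copy of $\Cvec$: every arc at $s$ points out of $s$, and every vertex of $S$ is an out-neighbour of $x$. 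This copy avoids $W$ and has index vector $\mathbf e_i+(\ell-1)\mathbf e_j$, as required.

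For $\mathbf q_{ij}$ the roles are swapped: one vertex in $Y$ and $\ell-1$ vertices in $X$. Using $d(X',Y')\ge 7d/8$ again, some $y\in Y'$ has $|N^-(y)\cap X'|\ge 7dm/8$. Let $t$ be a sink of $\Cvec$, embed $\Cvec-t$ in $D[N^-(y)\cap X']$ by the same chromatic argument, and map $t$ to $y$.

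The only delicate point is bookkeeping. First, the regular pair only guarantees arcs oriented from $X$ to $Y$, so we must use a source for one vector and a sink for the other; this is exactly where we need $\Cvec$ to be non-directed. Second, $\gamma$ must be chosen small relative to $d/M$, which the statement allows since $\gamma$ may depend on $d$ and $M$. Finally, $n_0$ is chosen large enough for these counting estimates to hold.
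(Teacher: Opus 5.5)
Your proof is correct and follows the paper's route. You pick a vertex of $X\setminus W$ (resp.\ $Y\setminus W$) with linearly many out- (resp.\ in-) neighbours on the other side avoiding $W$, then use $\alpha(D)\le\gamma n$ to push the chromatic number above the bound in Lemma~\ref{lem:tree} and embed $\Cvec-s$ (resp.\ $\Cvec-t$). The only differences are cosmetic: you apply regularity to $(X\setminus W,Y\setminus W)$ and average instead of using the typical-degree property, and your averaging gives $\frac{7d}{8}\bigl(1-\frac{\varepsilon}{4}\bigr)m$ rather than $\frac{7d}{8}m$, which is harmless given the roughly factor-$7$ slack in your chromatic estimate.
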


\begin{proof}
Choose a source $s$ and a sink $t$ of $\Cvec$, and put
$K:=(\ell-2)^2+1$.  Choose $\beta$ and $\gamma$, in the order, such that $0<\beta\ll\min\{\varepsilon/(2M),d/(8M)\} $ and $ 0<\gamma\ll d/(4MK).$  Fix an arbitrary set $W\subseteq V(D)$ with
$|W|\le\beta n$.

Because $(X,Y)$ is an $(\varepsilon , d)$-regular directed pair, it follows that all but at most $\varepsilon m$ vertices $x\in X$ satisfy
$d_D^+(x,Y)\ge(d-\varepsilon)m$.  Since $\beta n\le\varepsilon m/2$, there is a vertex
$x\in X\setminus W$ with this property.  Hence
\[
|N_D^+(x)\cap(Y\setminus W)|
\ge(d-\varepsilon)m-\beta n
\ge \frac d2m
\ge \frac d{2M}n.
\]
Let $S:=D[N_D^+(x)\cap(Y\setminus W)]$.  Since
$\alpha(S)\le\alpha(D)\le\gamma n$, we have
\[
\chi(UG(S))\ge \frac{|S|}{\alpha(S)}
\ge \frac{dn/(2M)}{\gamma n}
\ge 2K>K.
\]
By the embedding lemma, $S$ contains a copy of the oriented path $\Cvec-s$; adding $x$ gives a copy of $\Cvec$ with index vector $\mathbf p_{ij}$.

Similarly, all but at most $\varepsilon m$ vertices $y\in Y$ satisfy
$d_D^-(y,X)\ge(d-\varepsilon)m$.  Choose such a vertex $y\in Y\setminus W$ and set
$S':=N_D^-(y)\cap(X\setminus W)$.  Then
$|S'|\ge d m/2\ge dn/(2M)$, and the same chromatic estimate gives
$\chi(UG(D[S']))>K$.  Thus $D[S']$ contains a copy of $\Cvec-t$; adding $y$ gives a copy of $\Cvec$ with index vector $\mathbf q_{ij}$. Since $W$ was arbitrary, both index vectors are $(\Cvec,\beta)$-robust.
\end{proof}

\begin{lemma}[Absorbing lemma]\label{cor:absorber}
For every $\mu,\eta>0$ there are $\gamma,\xi, n_0>0$ such that every $D$ is an oriented graph of order $n\geq n_0$ satisfying $\semid(D)\geq\left(\frac14+\mu\right)n$ and  $ \ind(D)\leq\gamma n$ contains a set
$A\subseteq V(D)$, $|A|\leq\eta n$, for which $D[A\cup U]$ has a
$\Cvec$-factor for every set
 $U\subseteq V(D)\setminus A$ with $|U|\leq\xi n$ and
 $\ell\mid |A\cup U|.$
\end{lemma}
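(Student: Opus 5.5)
The plan is the standard lattice-based absorption route. Fix the constants in the order $1/n_0\ll\gamma\ll\xi\ll\beta'\ll\beta\ll\varepsilon\ll d\ll\mu,\eta$, with the closed-partition and transferral-merging constants inserted between them as those lemmas require. Apply Lemma~\ref{lem:4.6} with $c=1/4+\mu$: every vertex is $(\Cvec,\beta_1 n,1)$-reachable to at least $\gamma_1 n$ other vertices. Lemma~\ref{lem:4.8} then gives a partition $\cP=\{V_1,\dots,V_p\}$ with $p\le\lceil 1/\gamma_1\rceil$, $|V_i|\ge\gamma_1 n/2$, and each $V_i$ $(\Cvec,\beta n,T)$-closed. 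The goal is to show that $V(D)$ itself is closed. Then every $\ell$-set $S$ has robust index vector $\ell\bfe_1$ with respect to the trivial partition; any copy of $\Cvec$ in $D-W$ witnesses this, since $\delta^0$ stays linear after deleting $W$ and the argument of Lemma~\ref{lem:4.6} yields a copy. Lemma~\ref{closed} would then give linearly many disjoint absorbers for every $S$, and Lemma~\ref{absorbing} (with its $\eta$ at most our $\eta$) would output the set $A$ together with the required $\xi$.

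To merge the parts, build an auxiliary graph $\Gamma$ on $[p]$, joining $i$ and $j$ whenever some $(\varepsilon,d)$-regular directed pair $(X,Y)$ has $X\subseteq V_i$, $Y\subseteq V_j$ and $|X|=|Y|\ge n/M$. For such a pair, Lemma~\ref{lem:profiles} yields robust vectors $\mathbf p_{ij}$ and $\mathbf q_{ij}$ with $\mathbf q_{ij}-\mathbf p_{ij}=(\ell-2)(\bfe_i-\bfe_j)$. This is not a single transferral when $\ell\ge4$, so I would aim for a genuine transferral instead. Mimicking the proof of Lemma~\ref{lem:profiles}, embed the path $\Cvec-s$ inside $N^+(x)\cap V_j$ with exactly one vertex taken from $N^+(x)\cap V_i$ instead. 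Both vectors $\bfe_i+(\ell-1)\bfe_j$ and $2\bfe_i+(\ell-2)\bfe_j$ should then be robust, and their difference is $\bfe_j-\bfe_i$. This embedding uses small independence inside the common neighbourhoods together with Lemma~\ref{lem:tree}, applied to the subpath after fixing an endpoint neighbour. Lemma~\ref{transferral-merging} merges $V_i\cup V_j$ into a closed set. Iterating along a spanning tree of $\Gamma$ takes at most $p$ rounds, with constants degrading a bounded number of times.

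It remains to show that $\Gamma$ is connected, and this is where the bound $1/4+\mu$ enters. Apply Lemma~\ref{regular-lemma} with initial partition $\cP$ and pass to the reduced oriented graph $R$ of Lemma~\ref{reduce-lemma}, so that $\delta^0(R)\ge(1/4+\mu/2)|R|$. Each cluster lies in some $V_i$, and an edge of $R$ between clusters lying in different parts gives an edge of $\Gamma$. Suppose $\Gamma$ were disconnected. Then the clusters split into two nonempty classes $\mathcal A,\mathcal B$ with no arcs of $R$ between them. Any cluster in $\mathcal A$ has more than $|R|/2$ in- and out-neighbours combined, all inside $\mathcal A$, and since $R$ is oriented these are distinct vertices. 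Hence $|\mathcal A|>|R|/2$, and likewise $|\mathcal B|>|R|/2$, which is a contradiction.

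I expect the main obstacle to be producing a true unit transferral, rather than the $(\ell-2)$-multiple coming from Lemma~\ref{lem:profiles}. If the mixed embedding fails, the fallback is to argue via the lattice instead. The robust vectors from within-part copies (vectors $\ell\bfe_i$, obtained from dense regular pairs inside one part) combined with the $\mathbf p_{ij},\mathbf q_{ij}$ differences would generate enough of the transferral lattice. One would then use divisibility, namely that $\ell$ and $\ell-1$ are coprime together with the fact that $\mathbf p_{ij}$ and $\ell\bfe_j$ are both available, to extract $\bfe_i-\bfe_j$ as a combination of robust differences. This would require a lattice version of the merging lemma.
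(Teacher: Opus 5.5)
Your overall architecture (reachability, closed partition, merging until $V(D)$ is closed, then Lemmas~\ref{closed} and~\ref{absorbing}) matches the paper, and your counting argument that $\Gamma$ is connected is correct. The gap is the transferral step, which is the heart of the proof.

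\textbf{The mixed embedding is not justified.} To make $2\mathbf e_i+(\ell-2)\mathbf e_j$ robust, you need a vertex of $N^+(x)\cap V_i$ joined, with the orientation prescribed by $\Cvec-s$, to a path inside $N^+(x)\cap V_j$. Nothing you have provides this. The regular pair only controls arcs from $X$ to $Y$ between large sets. Nothing forces $x$ to have any out-neighbour in $V_i$. Lemma~\ref{lem:tree} gives an unanchored copy of a tree in a high-chromatic digraph, not one attached to a prescribed vertex.

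\textbf{The fallback is also incomplete.} It needs $\ell\mathbf e_i$ or $\ell\mathbf e_j$ to be robust, i.e.\ a regular pair with both clusters in one part, and connectivity of $\Gamma$ does not supply this. Suppose every arc of $R$ ran between $Q_1$ and $Q_2$. Then $\Gamma$ is connected, yet Lemma~\ref{lem:profiles} only yields $\mathbf p_{12}$ and $\mathbf q_{12}$, whose differences are multiples of $(\ell-2)(\mathbf e_1-\mathbf e_2)$. Note also that once $\ell\mathbf e_j$ is robust, $\mathbf p_{ij}-\ell\mathbf e_j=\mathbf e_i-\mathbf e_j$ is already a transferral, so no lattice version of the merging lemma is needed.

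\textbf{The missing idea is to pivot on a common part,} rather than compare the two vectors coming from a single pair. The paper shows that whenever $p\ge2$, some part $Q_i$ has arcs of $R$ to two \emph{different} parts $Q_{j'}\neq Q_{j''}$; one of them may be $Q_i$ itself. Suppose not. Then each part $Q_i$ sends all its arcs into a single part $Q_{j(i)}$, and the map $i\mapsto j(i)$ is injective. Every cluster has total degree greater than $|R|/2$ in $R$. Hence two distinct parts would each contain more than $|R|/2$ clusters, which is impossible. This rules out both disconnection and the bipartite configuration above; your counting only excluded the former.

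Now apply Lemma~\ref{lem:profiles} to the two pairs, placing the path in the cluster from $Q_i$ and the apex in the other cluster. This works whichever way the arcs point, since the lemma provides both $\mathbf e_i+(\ell-1)\mathbf e_j$ and $(\ell-1)\mathbf e_i+\mathbf e_j$. It gives the robust vectors $(\ell-1)\mathbf e_i+\mathbf e_{j'}$ and $(\ell-1)\mathbf e_i+\mathbf e_{j''}$, whose difference is exactly the transferral $\mathbf e_{j'}-\mathbf e_{j''}$. If $j'=i$, the first vector is $\ell\mathbf e_i$, recovering your fallback. Lemma~\ref{transferral-merging} then merges $U_{j'}\cup U_{j''}$, and one recomputes the claim on the coarser partition and iterates. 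No mixed embedding and no lattice merging is required.
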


\begin{proof}
To clearly illustrate our choice of parameters, we specify the parameters in the first paragraph. Given parameters $\mu, \eta>0$, we choose the constants
\[
\mu_0,\,\beta_{3.16},\,\gamma_{3.16},\,\beta_{3.11},\,t_{3.11},\,d,\,\varepsilon ,
\]
in sequential order satisfying the following. Let $\mu_0:=\min\left\{\mu,\frac14\right\}.$ The constants  $\beta_{3.16},\gamma_{3.16}>0$ are obtained by applying Lemma~\ref{lem:4.6} with parameter $\mu_0$. The constants $\beta_{3.11}>0$ and $t_{3.11}\in\mathbb{N}$ are obtained by applying Lemma~\ref{lem:4.8}  with $\alpha:=\beta_{3.16}$, $\gamma:=\gamma_{3.16}$, and $\vec H=\Cvec$.
Choose $d$ such that $0<d\ll \min \{\mu,\beta_{3.11},\gamma_{3.16},1/\ell\}$.  Next choose
$\varepsilon$ such that $0<\varepsilon\ll \min \{d,\mu,\gamma_{3.16} \} $, so that
\begin{equation}\label{4.1}
\varepsilon\leq\frac d8,
\qquad
\varepsilon\leq\frac18,
\qquad
d+3\varepsilon\leq\frac\mu2,
\qquad
\varepsilon<\frac{\gamma_{3.16}}{4}.
\end{equation}
To decide the constant $\gamma$ in the lemma, we first use  Diregularity lemma (Lemma \ref{regular-lemma}) with $M_0:=p_0$, $q:=p_0$, $\varepsilon:=\varepsilon$ and $\beta:=d$ to obtain an upper bound $M_{\rm reg}$ for the number of parts. Then let $M_{\rm prof}:=\max\left\{2M_{\rm reg},\left\lceil\frac{2}{\gamma_{3.16}}\right\rceil\right\}.$ And then apply Lemma~\ref{lem:profiles} with $M:=M_{\rm prof}$ to get the constants $\beta_{3.17},\gamma_{3.17}>0$. Put
\[
\rho_0:=\min\left\{\beta_{3.11},\frac{\beta_{3.17}}{2\ell}\right\} \text{ and } t_0:=t_{3.11}.
\]
For $0\le r\le p_0-2$, we recursively construct the parameters $\rho_{r+1}$ and $t_{r+1}$ as follows. First, we invoke Lemma~\ref{transferral-merging} with $\rho:=\rho_r$, $t:=t_r$, and $\vec H:=\Cvec$ to yield constants $\beta_{3.12}(\rho_r)$ and $\gamma_{3.12}(\rho_r,t_r)$. We then set
\[
\rho_{r+1}:=\min\left\{\frac{\rho_r}{2},\beta_{3.12}(\rho_r)\right\},\qquad
t_{r+1}:=\gamma_{3.12}(\rho_r,t_r).
\]
Finally set
\[
\beta_*:=\rho_{p_0-1}, \qquad t_*:=t_{p_0-1}
\qquad \text{and} \qquad
\gamma:=\min\left\{\frac{\gamma_{3.17}}2,\frac1{M_{\rm prof}}\right\}.
\]
Increase $n_0$ if necessary so that all selected constants are valid for $n\ge n_0$.

Let $D$ be an oriented graph satisfying the assumptions of the lemma with $\alpha(D)\leq\gamma n$.  Since $\delta^0(D)\geq\left(\frac14+\mu\right)n\geq\mu_0n,$
Lemma~\ref{lem:4.6} implies that every vertex is
$(\Cvec,\beta_{3.16}n,1)$‑reachable to at least $\gamma_{3.16}n$ other vertices.
Hence Lemma~\ref{lem:4.8} with $\alpha:=\beta_{3.16},\gamma:=\gamma_{3.16}$ and $\vec H:=\Cvec$ gives a partition
\begin{equation}\label{4.6}
\mathcal P_0=\{V_1^0,\ldots,V_{p_0'}^0\},
\qquad p_0'\leq p_0:=\left\lceil\frac1{\gamma_{3.16}}\right\rceil,
\end{equation}
such that
\begin{equation}\label{4.7}
|V_i^0|\geq\frac{\gamma_{3.16}}{2}n
\quad\text{and}\quad
V_i^0\text{ is }(\Cvec,\beta_{3.11}n,t_{3.11})\text{-closed}
\end{equation}
for every $i\in[p_0']$.

Apply Diregularity lemma (Lemma \ref{regular-lemma}) with $M_0:=p_0$, $\{U_1,\dots,U_{p_0'}\}:=\{V_1^0,\ldots,V_{p_0'}^0\} $, $\varepsilon:=\varepsilon$, and $\beta:=d$, refining $\mathcal P_0$.  By the preceding parameter choice, the number of clusters is at most the preselected constant $M_{\rm reg}$.  This gives the resulting partition
$\mathcal P^*=\{V_0,X_1,\ldots,X_k\}$, where $|V_0|\leq\varepsilon n$ and $|X_1|=\cdots=|X_k|=m:=\frac{n-|V_0|}{k}$.  Moreover, $ k\leq M_{\rm reg}$. Let $R$ be the reduced oriented graph
supplied by Lemma~\ref{reduce-lemma} with $\varepsilon :=\varepsilon $ and $\beta:=d$.  From $ \delta^0(D)
\geq (\frac14+\mu)n$,  we obtain the estimate
\begin{equation}\label{4.8}
\begin{split}
\delta^0(R)
&\geq
\left(\frac{\delta^0(D)}{n}-d-3\varepsilon\right)|R|\geq
\left(\frac14+\mu -d-3\varepsilon\right)|R|
\stackrel{\eqref{4.1}}{\geq}
\left(\frac14+\frac\mu2\right)|R|.
\end{split}
\end{equation}
Moreover, $k=|R|$ and therefore
\begin{equation}\label{4.9}
\begin{split}
m
&=\frac{n-|V_0|}{k}
\geq\frac{(1-\varepsilon)n}{k}
\geq\frac{n}{2k}.
\end{split}
\end{equation}
The second inequality uses $\varepsilon<1/2$. Since $k\leq M_{\rm reg}$, we also have $m\geq n/(2M_{\rm reg})$.
 We next merge the closed classes. 
Suppose that, after $r$ mergers, where
$0\leq r\leq p_0'-1$, we have a partition $\mathcal P_r=\{U_1^r,\ldots,U_{p_r}^r\}$ satisfying the following three properties:
\begin{enumerate}[label=\textup{(P\arabic*)}$_r$]
\item $p_r=p_0'-r$;
\item every $U_i^r$ is a union of some parts of $\mathcal P_0$, $i\in [p_r]$;
\item every $U_i^r$ is $(\Cvec,\rho_rn,t_r)$‑closed, $i\in [p_r]$.
\end{enumerate}
These properties hold for $r=0$ by \eqref{4.6}--\eqref{4.7} and the definition of $\rho_0$ and the monotonicity of closedness. Obviously, every $U_i^r$ contains at least one original part
$V_j^0$ in $\mathcal P_0 $, and hence, by \eqref{4.7} and $|V_0|\leq\varepsilon n$, for each $i\in [p_r]$, $|U_i^r\setminus V_0|
\geq\left(\frac{\gamma_{3.16}}{2}-\varepsilon\right)n
>0.$

Assume that $p_r\geq 2$.  Since $\mathcal P^*$ refines $\mathcal P_0$, every non-exceptional cluster of $\mathcal P^*$ lies entirely within some part of $\mathcal P_r$. For $i\in[p_r]$, let
\[
Q_i^r:=\bigl\{v\in V(R)\,\big|\, \text{the cluster $X_v$ corresponding to }v \text{ in } \mathcal P^* \text{ such that }X_v\subseteq U_i^r\bigr\}.
\]
Thus, each $Q_i^r$
is nonempty, and $V(R)=Q_1^r\mathbin{\dot\cup}\cdots\mathbin{\dot\cup}Q_{p_r}^r.$

 We claim that
 \begin{equation}\label{arc}
   \begin{aligned}
   &\text{there is an $i\in [p_r]$ such that $R$ contains an arc  between $Q_i^r$ and $ Q_{j'}^r$ and }\\
   &\text{\ \ \ \ \ \ \ an arc between $Q_i^r$ and $ Q_{j''}^r$ for some $j',j''\in [p_r]$ with $j'\neq j''$,}
   \end{aligned}
 \end{equation}
(note that it is possible that $j'=i$ or $j''=i$).
 Otherwise, fix an index $i\in [p_r]$ and a vertex $v\in Q_i^r\subseteq V(R)$, all in‑ and out‑neighbours in $R$
of $v$ would lie in a same part $Q_{j'}^r$. Since $R$ is oriented,
\begin{equation}\label{4.12}
\begin{split}
|Q_{j'}^r|-1
&\geq d_R^+(v)+d_R^-(v)\geq 2\delta^0(R)
\stackrel{\eqref{4.8}}{\geq}
\left(\frac12+\mu\right)|R|
>\frac{|R|}{2}.
\end{split}
\end{equation}
Since $p_r\ge 2$, fix an arbitrary index $i'\neq i$ and a vertex $v\in Q_{i'}^r\subseteq V(R)$. Analogously, there exists a part $Q_{j''}^r$ satisfying $|Q_{j''}^r|>|R|/2$. Since inequality \eqref{arc} fails, we have $j'\neq j''$. This yields $|Q_{j'}^r|+|Q_{j''}^r|>|R|$, which is impossible given $p_r\ge 2$. This completes the proof of the claim.

Choose two such arcs. Without loss of generality, suppose that $uv',uv''\in A(R)$ with $u\in Q_i^r$, $v'\in Q_j^r$, $v''\in Q_l^r$ and $j\neq l$ (the remaining cases follow similarly). By the construction of $R$, both associated directed pairs $(V_i,V_{j'})$ and $(V_i,V_{j''})$ are $(\varepsilon,d)$‑regular, where $V_i$ denotes the cluster in $\mathcal P^*$ corresponding to $u$, $V_{j'}$ the cluster in $\mathcal P^*$ corresponding to $v'$, and $V_{j''}$ the cluster in $\mathcal P^*$ corresponding to $v''$. By \eqref{4.9},
$|V_i|=|V_{j'}|=|V_{j''}|=m\geq n/2k$.
 Since $\alpha(D)\leq\gamma n\leq\gamma_{3.17}n$, Lemma~\ref{lem:profiles} with $d:=d,M:=M_{prof},\varepsilon:=\varepsilon$
provides two $(\Cvec, \beta_{3.17})$-robust $\ell$-vectors $\mathbf p_{ij'}:=(\ell-1)\mathbf e_i+\mathbf e_{j'}$ and $
\mathbf q_{ij''}:=(\ell-1)\mathbf e_i+\mathbf e_{j''},$. Therefore,
\begin{equation}\label{4.14}
\mathbf p_{ij'}-\mathbf q_{ij''}=\mathbf e_{j'}-\mathbf e_{j''}.
\end{equation}

For every $r$, we have $\rho_r\leq\rho_0\leq\beta_{3.17}/(2\ell)$, and hence $\beta_{3.17}\geq \ell\rho_r$. Apply Lemma~\ref{transferral-merging} with $\vec H:=\Cvec$, $h:=\ell$, $t:=t_r$ and $\rho:=\rho_r$. It follows from \eqref{4.14} that
$U_{j'}^r\cup U_{j''}^r$ is $\left(\Cvec,\beta_{3.12}(\rho_r)n,\gamma_{3.12}(\rho_r,t_r)\right)$-closed. By the recursive choice
$\rho_{r+1}\leq\beta_{3.12}(\rho_r)$ and $t_{r+1}=\gamma_{3.12}(\rho_r,t_r)$, monotonicity gives that
$U_{j'}^r\cup U_{j''}^r$ is $\left(\Cvec,\rho_{r+1}n,t_{r+1}\right)$-closed. Every part $U_l^r$ with $l\in [p_r]\setminus \{j',j''\}$ is also
$(\Cvec,\rho_{r+1}n,t_{r+1})$‑closed, because
$\rho_{r+1}\leq\rho_r$ and $t_{r+1}\geq t_r$.  Replacing
$U_{j'}^r,U_{j''}^r$ by their union  $U_{j'}^r\cup U_{j''}^r$ therefore produces a new partition
$\mathcal P_{r+1}$ satisfying \textup{(P1)}$_{r+1}$--\textup{(P3)}$_{r+1}$.
At each step the number of parts decreases by one.  Hence after at most
$p_0'-1\leq p_0-1$ mergers we obtain a single closed class, namely
$V(D)$.  If the process terminates after $r_*\leq p_0-1$ steps, then
$V(D)$ is $(\Cvec,\rho_{r_*}n,t_{r_*})$-closed.
Since $\rho_{p_0-1}\leq\rho_{r_*}$ and $t_{p_0-1}\geq t_{r_*}$, the constants
$\beta_*:=\rho_{p_0-1}$ and $t_*:=t_{p_0-1}$ selected above give, by monotonicity, $V(D)$ is $(\Cvec,\beta_*n,t_*)$-closed. Further, by monotonicity,
\[
\text{$V(D)$ is $(\Cvec,\beta_*n/2,t_*)$‑closed.}
\]

 Let
$\mathcal P:=\{V(D)\}$ be the one‑part partition. Indeed, given
 $W\subseteq V(D)$ with $|W|\leq\beta_*n/2$, choose distinct
$u,v\in V(D)\setminus W$. A $\Cvec$‑connector for $u,v$ avoiding $W$
provides a copy of $\Cvec$ in
$D-W$.  Thus the unique $\ell$-vector $\ell \boldsymbol e_1$ with respect to $\mathcal P$ is $(\Cvec,\beta_*/2)$-robust. Set
$ \eta_{\rm abs}:=\min\left\{\eta,\frac{\beta_*}{4\ell ^2t_*}\right\}.
$ Applying Lemma~\ref{closed} with $\rho=\beta_*/2$ and $t=t_*$, we obtain that for each $S\in\binom{V(D)}{\ell}$, the number of vertex‑disjoint \((\vec{\mathcal C},t_*)\)‑absorbers each of size at most \(\ell ^2t_*\) is at least \(\frac{\beta_*}{4\ell ^2t_*}n \geq\eta_{\rm abs}n.\) Applying Lemma~\ref{absorbing} with
$\vec H:=\Cvec$, $L:=\ell ^2t_*$, and $\eta:=\eta_{\rm abs}$ yields a real $\xi_{\rm abs}:=\xi(\ell ,\ell ^2t_*,\eta_{\rm abs})$ and a
$\xi_{\rm abs}$‑absorbing set $A$ for $\Cvec$ satisfying $|A|\leq\eta_{\rm abs}n.$  Thus the conclusion holds with
$\gamma=\gamma$, $\xi=\xi_{\rm abs}$ and $n_0=n_0$.
\end{proof}

We also need a new almost-covering lemma, which is stated as Lemma \ref{lem:almost-cover}. To prove that, we need the following lemma.

\begin{lemma}\label{lem:two-cluster}
For every $d,\zeta>0$ there are $\varepsilon,\theta>0$ and $m_0\in\N$ such that the following holds.  Let $D$ be an oriented graph and let $X,Y$ be disjoint sets of order $m\ge m_0$.  Suppose that $(X,Y)$ is an $(\varepsilon,d)$-regular directed pair and
$\alpha(D[X\cup Y])\le\theta m$.  Then there is a set $W\subseteq X\cup Y$ with
$|W|\le6\zeta m$ such that $D[X\cup Y]-W$ has a $\Cvec$-factor.
\end{lemma}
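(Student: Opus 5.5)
We will build the factor greedily, one copy of $\Cvec$ at a time, and show that each copy uses exactly one vertex of one cluster and $\ell-1$ vertices of the other. The plan is to remove copies in two phases so that the two clusters are depleted at the same rate. Fix a source $s$ and a sink $t$ of $\Cvec$, and put $K:=(\ell-2)^2+1$. Choose $\varepsilon\ll\zeta,d$ and then $\theta\ll\zeta d/K$.

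\textbf{Basic step.} Let $X'\subseteq X$ and $Y'\subseteq Y$ have $|X'|,|Y'|\ge\zeta m$. By the Slicing Lemma (Lemma~\ref{lem:slicing}), the pair $(X',Y')$ is still $(\varepsilon/\zeta)$-regular with density at least $d-\varepsilon$. Hence some $x\in X'$ has at least $(d/2)|Y'|\ge d\zeta m/2$ out-neighbours in $Y'$. Let $S:=N^+(x)\cap Y'$. We have $\alpha(D[S])\le\theta m$, so
\[
\chi(UG(D[S]))\ge \frac{d\zeta}{2\theta}>K .
\]
By Lemma~\ref{lem:tree}, $D[S]$ contains a copy of $\Cvec-s$, and adding $x$ gives a copy of $\Cvec$ of type $(1,\ell-1)$, meaning one vertex in $X'$ and $\ell-1$ in $Y'$. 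Symmetrically, some $y\in Y'$ has at least $(d/2)|X'|$ in-neighbours in $X'$, and embedding $\Cvec-t$ into $N^-(y)\cap X'$ gives a copy of type $(\ell-1,1)$. So while both remaining parts have size at least $\zeta m$, we can always take a copy of either type.

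\textbf{Balancing.} When $\ell=2$ the two types coincide, and we simply alternate until fewer than $\zeta m$ vertices remain on one side. For $\ell\ge3$, let $a$ copies be of type $(1,\ell-1)$ and $b$ copies be of type $(\ell-1,1)$. We want
\[
a+(\ell-1)b\approx b+(\ell-1)a\approx(1-\zeta)m ,
\]
which holds for $a=b\approx(1-\zeta)m/\ell$. Run the process alternating the two types, one of each per round. Each round removes $\ell$ vertices from $X$ and $\ell$ from $Y$, so both sides shrink equally. Stop once each side has fewer than $\zeta m+\ell$ remaining vertices. Until then both sides have more than $\zeta m$ vertices, so the basic step applies throughout.

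\textbf{Leftover set.} Let $W$ be the set of uncovered vertices. It has size at most $2\zeta m+2\ell\le 6\zeta m$ for $m\ge m_0$, and $D[X\cup Y]-W$ has a $\Cvec$-factor consisting of the chosen copies.

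\textbf{Main obstacle.} The delicate point is ensuring that regularity survives as the sets shrink. Because the process only needs subsets of size at least $\zeta m$, the Slicing Lemma with $\alpha=\zeta$ handles this. This forces $\varepsilon\le\zeta d/4$, so that $\varepsilon/\zeta$ stays small compared with $d$. The chromatic bound then needs $\theta$ small relative to $d\zeta/K$, which is consistent with the order in which the constants are chosen.
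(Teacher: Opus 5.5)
Your proposal is correct and is essentially the paper's argument. In each round you take a copy of type $(1,\ell-1)$ and then one of type $(\ell-1,1)$, so every round removes exactly $\ell$ vertices from each side, and you stop once only $O(\zeta m)$ vertices remain. The first copy uses a vertex $x$ of the remaining part of $X$ with many out-neighbours in the remaining part of $Y$ as the source $s$, with $\Cvec-s$ embedded in $N^+(x)$ via Lemma~\ref{lem:tree} and $\chi\ge|S|/\alpha$; the second uses a vertex $y$ as the sink $t$.

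The differences from the paper are cosmetic. You invoke the Slicing Lemma, but the definition of $\varepsilon$-regularity already gives density at least $d-\varepsilon$ on subsets of size at least $\varepsilon m$. You stop at $\zeta m+\ell$ rather than $3\zeta m$ per side. The $\ell=2$ case you mention never arises, since $\ell\ge3$.
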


\begin{proof}
Choose a source $s$ and a sink $t$ of $\Cvec$, and put $K:=(\ell-2)^2+1$.  Choose $\varepsilon,\theta>0$ and $m_0\in\N$ such that $0<\varepsilon\le\min\{d/8,\zeta/8\}$, $ 0<\theta\le\frac{d\zeta}{2K}$ and $\ell\le\zeta m_0/2$.
We greedily construct disjoint pairs $\mathcal{C}$  of copies $F_s,F_t$ of $\Cvec$, where $F_s$ has index vector $(1,\ell-1)$ and $F_t$ has index vector $(\ell-1,1)$ with respect to $(X,Y)$.  Thus each pair removes exactly $\ell$ vertices from each part. The lemma holds whenever $|X\setminus V(\mathcal{C})|\leq 3\zeta m$ (since our construction implies that $|X\setminus V(\mathcal{C})|=|Y\setminus V(\mathcal{C})| $). Suppose otherwise that $|X\setminus V(\mathcal{C})|> 3\zeta m$.

Set $X_0\subseteq X\setminus V(\mathcal{C}) $ and $Y_0\subseteq Y\setminus V(\mathcal{C})$ be two subsets with $|X_0|=|Y_0|=L\geq 2\zeta m+\ell$. Since $|X_0|=|Y_0|=L\ge 2\zeta m+\ell>\eps m$ and $(X,Y)$ is $(\eps,d)$-regular, we have $d_D(X_0,Y_0)\ge d-\eps$. Hence there is a vertex $x\in X_0$ with
\[
 |N_D^+(x)\cap Y_0|\geq(d-2\eps)L\geq \frac d2L\ge d\zeta m,
\]
as $\varepsilon\le d/8$. The chromatic number of the underlying graph of $D[N_D^+(x)\cap Y_0]$ is at least $ |N_D^+(x)\cap Y_0|/\alpha (D[X\cup Y])\geq K$, so it contains $\Cvec-s$.  Adding $x$ produces a copy $F_s$ of $\Cvec$ with $\ell$-vector
$(1,\ell-1)$ on $(X_0,Y_0)$.

 After deleting $F_s$, write the remaining sets as $X_1,Y_1$.  Both still
have size at least $L-\ell\ge2\zeta m$.  Similarly, there exists a vertex $y\in Y_1$ with $|N_D^-(y)\cap X_1|\geq(d-\eps)|X_1|\geq d\zeta m/2.$  The chromatic number of underlying graph of the digraph $D[N_D^-(y)\cap X_1]$ is at least $K$ (reduce $\theta$ by a
factor two if necessary), hence $D[N_D^-(y)\cap X_1]$ contains $\Cvec-t$.  Adding $y$ gives a
copy $F_t$ with $\ell$-vector $(\ell-1,1)$.  Together $F_s,F_t$ remove exactly
$\ell$ vertices from each part $X$ and $Y$, and then update $\mathcal{C}:=\mathcal{C}\cup \{F_s,F_t\}$.  Repeat until
$|X\setminus V(\mathcal{C})|\leq 3\zeta m$ vertices
remain in each part, and fewer than $6\zeta m$ in total.
\end{proof}

\begin{lemma}[Almost Covering Lemma]\label{lem:almost-cover}
For every $\mu,\xi>0$ and every integer $\ell\ge3$ there are $\gamma>0,n_0\in\N$ such that every oriented graph $D$ of order $n\ge n_0$ with
$\delta^0(D)\ge(1/4+\mu)n$ and $\alpha(D)\le\gamma n$ has a $\Cvec$-factor of $D-W$ for some $W\subseteq V(D)$ with $|W|\le\xi n$.
\end{lemma}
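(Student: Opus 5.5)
The plan is to pass to the reduced oriented graph and cover almost all of it with vertex-disjoint edges, then use Lemma~\ref{lem:two-cluster} to tile each matched pair of clusters.

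\emph{Choice of constants.} Given $\mu,\xi$, put $\zeta:=\xi/20$. Choose $d\ll\min\{\mu,\xi\}$. Let $\varepsilon,\theta,m_0$ be the constants given by Lemma~\ref{lem:two-cluster} for $(d,\zeta)$, shrinking $\varepsilon$ so that also $\varepsilon\le\min\{d/8,\mu/8,\xi/20\}$. Apply Lemma~\ref{regular-lemma} together with Lemma~\ref{reduce-lemma} (parameters $\varepsilon$, $\beta:=d$, $M_0:=N(\varepsilon)$) to obtain the bound $M$ on the number of clusters. Finally set $\gamma:=\theta/(2M)$, and take $n_0$ large enough that $m\ge n/(2M)\ge m_0$.

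\emph{Reduced graph and matching.} Apply the Diregularity lemma to $D$, with the trivial initial partition. This gives the exceptional set $V_0$ with $|V_0|\le\varepsilon n$, clusters $X_1,\dots,X_k$ of size $m\ge n/(2M)$, and, as in \eqref{4.8}, a reduced oriented graph $R$ with
\[
\delta^0(R)\ge\left(\frac14+\frac\mu2\right)k.
\]
Let $G$ be the underlying graph of $R$. Since $R$ is oriented, $\delta(G)\ge d^+_R(v)+d^-_R(v)\ge(1/2+\mu)k$. Because $\delta(G)\ge k/2$, Dirac's theorem gives a Hamilton cycle in $G$. Taking alternate edges of it yields a matching $\mathcal M$ covering all but at most one vertex of $R$.

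\emph{Tiling the matched pairs.} Each edge of $\mathcal M$ corresponds to an arc $uv$ of $R$. Orient the pair as $(X_u,X_v)$, so that it is an $(\varepsilon,d)$-regular directed pair in the pure digraph, and hence in $D$. Moreover,
\[
\alpha(D[X_u\cup X_v])\le\gamma n\le\theta m,
\]
since $m\ge n/(2M)$. Lemma~\ref{lem:two-cluster} therefore yields a set $W_{uv}$ with $|W_{uv}|\le6\zeta m$ such that $D[X_u\cup X_v]-W_{uv}$ has a $\Cvec$-factor. Take the union of these factors over all edges of $\mathcal M$.

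\emph{Size of the uncovered set.} The uncovered set $W$ consists of $V_0$, at most one unmatched cluster, and the sets $W_{uv}$. Hence
\[
|W|\le\varepsilon n+m+\frac{k}{2}\cdot 6\zeta m\le\varepsilon n+\frac{n}{M_0}+3\zeta n\le\xi n,
\]
for $M_0$ large. This is the required conclusion.

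\emph{Main obstacle.} The only delicate point is making the matching step rigorous. One must check that using Lemma~\ref{reduce-lemma} keeps $R$ oriented, so that the in- and out-neighbourhoods of each vertex are disjoint and the degrees add up in $G$. If Dirac's theorem is not wanted, a near-perfect matching also follows directly from $\delta(G)\ge k/2$ by a standard augmenting argument. Either way, the independence-number condition is used only inside Lemma~\ref{lem:two-cluster}, via Lemma~\ref{lem:tree}.
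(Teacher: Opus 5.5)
Your proposal is correct and follows essentially the same route as the paper's proof. Both regularize with the trivial initial partition, apply Dirac's theorem to the underlying graph of the reduced oriented graph $R$ to get a matching missing at most one cluster, apply Lemma~\ref{lem:two-cluster} to each matched directed pair with $\gamma$ of order $\theta/M$, and bound the leftover by $|V_0|+m+3\zeta n$.

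The only imprecision, which the paper shares, is your phrase ``and hence in $D$''. Lemma~\ref{regular-lemma} as stated gives regularity only in the pure digraph. This does not break the argument: the proof of Lemma~\ref{lem:two-cluster} uses the regular pair only to find a vertex with many out- (or in-)neighbours, which are also neighbours in $D$, and then uses $\alpha(D)$ inside that neighbourhood.
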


\begin{proof}
Fix $\mu,\xi,\ell$, and choose $\zeta$ and $ d$ so that $0<\zeta\ll\xi,\mu,1/\ell $ and $0<d\ll\mu,\zeta,1/\ell$.
Take the constants $\varepsilon_{\rm emb},\theta_{\rm emb},m_0$ from Lemma~\ref{lem:two-cluster} with parameters $d,\zeta$.  Choose $M_0$ so that $1/M_0\ll\xi$, and then choose
\[
0<\varepsilon\ll\varepsilon_{\rm emb},d,\mu,\zeta,\xi,1/M_0.
\]
Apply Lemma~\ref{regular-lemma} with $q=1$, obtaining an upper bound $M$ on the number of clusters and a partition
$(V_0,V_1,\ldots,V_k)$, where
$|V_0|\le\varepsilon n$ and $|V_1|=\cdots=|V_k|=m=(n-|V_0|)/k$, and let $R$ be the reduced oriented graph given by Lemma~\ref{reduce-lemma}.  We may choose the regularity parameters so that $\delta^0(R)\ge(1/4+\mu/2)k.$
Thus the underlying graph $G:=UG(R)$ satisfies $\delta(G)\ge2\delta^0(R)>(1/2)k.$
Dirac's theorem yields a Hamilton cycle in $G$, and hence a matching $\mathcal M$ covering all vertices of $R$ except at most one.  For every edge $uv\in E(G)$ of this matching, one of the two corresponding directed pairs is an $(\varepsilon_{\rm emb},d)$-regular pair by the definition of the reduced oriented graph.

  Since $k\le M$ and $|V_0|\le\varepsilon n< n/2$, there is
\[
m=\frac{n-|V_0|}{k}\ge\frac{n}{2M}.
\]
Choose \(\gamma>0\) satisfying \(2M\gamma\le\theta_{\rm emb}\), and take \(n_0\) sufficiently large so that \(m\ge m_0\) whenever \(n\ge n_0\). For every matched pair $(V_i,V_j)$ we have $\alpha(D[V_i\cup V_j])\le\alpha(D)\le\gamma n
\le2M\gamma m\le\theta_{\rm emb}m.$
Hence Lemma~\ref{lem:two-cluster} applies independently to every matched pair, leaving a set $W_{ij}$ of at most $6\zeta m$ vertices and covering the remainder by copies of $\Cvec$.

The exceptional set $V_0$ and the at most one unmatched part contribute at most $|V_0|+m$, while the matched pairs contribute at most
$6\zeta m\cdot |\mathcal M|\le3\zeta km\le3\zeta n$ leftover vertices.  Therefore
\[
|V_0|+m+\sum_{uv\in\mathcal M}|W_{uv}|
\le\varepsilon n+\frac{n}{M_0}+3\zeta n
<\xi n,
\]
for the chosen hierarchy.  This proves the lemma.
\end{proof}

\subsection{Proof of the theorem}

\begin{proof}[Proof of Theorem~\ref{thm:main}]
Fix $\mu>0$.  Choose $\eta>0$ so small that $0<\eta\le\min\left\{\frac\mu4,\frac12\right\}.$
Apply Corollary~\ref{cor:absorber} with parameters $\mu,\eta$ and denote the resulting constants by $\gamma_{\rm abs},\xi_{\rm abs},n^{3.18}_{0}$.  Next apply Lemma~\ref{lem:almost-cover} with parameters $\mu/2,\xi_{\rm abs},\ell$ and denote the resulting constants by $\gamma_{\rm cov},n^{3.20}_{0}$.  Set
\[
{\gamma:=\min\left\{\gamma_{\rm abs},\frac{\gamma_{\rm cov}}4\right\}}.
\]
Choose $n_0$ sufficiently large so that $n_0:=\max \{n^{3.18}_{0},n^{3.20}_{0}/(1-\eta), 1/\eta \}$.

Let $D$ satisfy the hypotheses of Theorem~\ref{thm:main}.  Lemma~\ref{cor:absorber} gives an absorbing set $A$ with $|A|\le\eta n$.  Put $D':=D-A$ and $n':=|D'|$.  Since $|A|\le\eta n$ and $\eta\le\mu/4$, we obtain that
\[
\begin{aligned}
\delta^0(D')\ge \delta^0(D)-|A|\ge (1/4+\mu)n-\eta n\ge (1/4+\mu/2)n,
\end{aligned}
\]
and, because $n'\le n$ and $n'\ge(1-\eta)n\ge n/2$, there is $(1/4+\mu/2)n\ge (1/4+\mu/2)n'.$
Moreover,
\[
\alpha(D')\le\alpha(D)\le\gamma n
\le\frac{\gamma_{\rm cov}}4n
\le\frac{\gamma_{\rm cov}}2n',
\]
where the last inequality uses $n'\ge n/2$.

Applying Lemma~\ref{lem:almost-cover} to $D'$ with parameters $\mu/2,\xi_{\rm abs},\ell$ yields a set $W\subseteq V(D')$ with
$|W|\le\xi_{\rm abs}n'$ such that $D'-W$ has a $\Cvec$-factor $\mathcal F$.  Hence $|V(\mathcal F)|$ is divisible by $\ell$. Together with $\ell\mid n$, we have $|A\cup W|=n-|V(\mathcal F)|\equiv0\pmod\ell.$
The defining property of $A$ therefore gives a $\Cvec$-factor of $D[A\cup W]$.  Together with $\mathcal F$, these copies form a $\Cvec$-factor of $D$.
\end{proof}

\section{Sharpness}
The Proposition \ref{prop:lower} implies that the bound of {semidegree} in Theorem~\ref{thm:main} is
asymptotically best possible.
\begin{proposition}\label{prop:lower}
For infinitely many multiples $n$ of $\ell$, there is an oriented graph
$D$ with $\ind(D)=2$ and $\semid(D)\geq n/4-1$ which has no
$\Cvec$-factor.
\end{proposition}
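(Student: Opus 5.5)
The plan is the standard disconnected extremal example: two disjoint regular tournaments with no arcs between them. Every copy of $\Cvec$ is connected, so it lies inside one of the tournaments. Choosing the tournament orders not divisible by $\ell$ then kills any $\Cvec$-factor, while each tournament alone keeps both the semidegree and the independence number under control.

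\emph{Orders.} Take $n:=2\ell q$ for any integer $q$ such that $\ell q$ is even; for example, take all even $q$, which gives infinitely many multiples $n$ of $\ell$. Put $a:=\ell q+1$ and $b:=\ell q-1$. Then $a$ and $b$ are both odd and $a+b=n$. Since $\ell\ge3$, we have $a\equiv1\pmod\ell$ and $b\equiv-1\pmod\ell$, so $\ell\nmid a$ and $\ell\nmid b$.

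\emph{Regular tournaments.} For odd $N$, a regular tournament $T_N$ exists: take the rotational tournament on $\mathbb Z_N$ with arcs $x\to x+i$ for $i\in[1,(N-1)/2]$. In $T_N$ every vertex has in- and out-degree exactly $(N-1)/2$.

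\emph{The graph $D$.} Let $D$ be the disjoint union of $T_a$ and $T_b$, with no arcs between them. Then $D$ is an oriented graph with
\[
\semid(D)=\min\left\{\tfrac{a-1}{2},\tfrac{b-1}{2}\right\}=\tfrac{\ell q-2}{2}=\tfrac n4-1.
\]

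\emph{Independence number.} Among any three vertices, two lie in the same tournament and are therefore joined by an arc, so $\ind(D)\le2$. Any pair with one vertex in each part is independent, so $\ind(D)=2$.

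\emph{No factor.} Suppose $D$ had a $\Cvec$-factor. Each copy of $\Cvec$ is weakly connected and $D$ has no arcs between $V(T_a)$ and $V(T_b)$, so every copy lies entirely in $V(T_a)$ or entirely in $V(T_b)$. The copies inside $V(T_a)$ would then partition $V(T_a)$, forcing $\ell\mid a$, a contradiction.

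There is no real obstacle here. The only point needing care is the arithmetic: choosing the part sizes so that both are odd (needed for regular tournaments), both are off from multiples of $\ell$, and both are close to $n/2$ (needed for the $n/4-1$ semidegree). The choice $a=\ell q\pm1$ with $\ell q$ even meets all three requirements at once.
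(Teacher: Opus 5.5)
Your proposal is correct and is essentially the paper's construction: a disjoint union of regular tournaments of orders $n/2\pm1$ with $4\mid n$, which forces every copy of $\Cvec$ into one component even though neither order is divisible by $\ell$. Your choice $n=2\ell q$ with $\ell q$ even slightly generalizes the paper's $n=4\ell m$. You also supply details the paper leaves implicit, namely the rotational construction of regular tournaments and the pigeonhole argument giving $\alpha(D)=2$.
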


\begin{proof}
Take $n=4\ell m$ for arbitrarily large integers $m$.  Then $n/2-1$ and
$n/2+1$ are odd and neither is divisible by $\ell$.  Let $D_1,D_2$ be regular
tournaments of orders $n/2-1$ and
$n/2+1$, respectively. Let $D$ be their disjoint union.    Hence $\ind(D)=2$, and
\[
 \semid(D)=\min\left\{\frac{|D_1|-1}{2},\frac{|D_2|-1}{2}\right\}
 \geq n/4-1.
\]
The underlying cycle $C_\ell$ is connected, so every $\Cvec$-copy lies
wholly in one component.  A factor would force both component orders to be
divisible by $\ell$, a contradiction.
\end{proof}

\end{document}